
\documentclass[12pt]{amsart}
\usepackage[mathscr]{eucal}
\usepackage{amssymb}
\usepackage{latexsym}
\usepackage{amsthm}
\usepackage{amsmath}
\usepackage{graphicx}

\pagestyle{plain}

\theoremstyle{plain}

\newtheorem{thm}{Theorem}[section]

\theoremstyle{definition}
\newtheorem{rem}{Remark}[section]
\newtheorem{dfn}{Definition}[section]

\setlength{\textwidth}{15.5cm}
\setlength{\textheight}{21cm}
\setlength{\oddsidemargin}{0cm}
\setlength{\evensidemargin}{0cm}

\title{Density measures on a certain flow}
\author{Ryoichi Kunisada}

\address{Department of Mathematical Science, Graduate School of Science and Engineering, Waseda University, Shinjuku-ku, Tokyo 169-8555, Japan}

\email{tk-waseda@ruri.waseda.jp}

\date{}

\begin{document}
\maketitle

\begin{abstract}
We study finitely additive extensions of the asymptotic density to all the subsets of natural numbers. Such measures are called density measures. We consider a class of density measures constructed from free ultrafilters on $\mathbb{N}$ and investigate absolute continuity and singularity for those density measures. In particular, for any pair of such density measures we prove necessary and sufficient conditions that one is absolutely continuous with respect to the other and that they are singular. Also we prove the same results for weak absolute continuity and strong singularity.
\end{abstract}

\bigskip

\section{Introduction}
We denote the set of natural numbers by $\mathbb{N}$, and the family of all subsets of $\mathbb{N}$ by $\mathcal{P}(\mathbb{N})$. Recall that the asymptotic density of a set $A \in \mathcal{P}(\mathbb{N})$ is defined as
\[D(A) = \lim_n \frac{|A \cap n|}{n} \]
if this limit exits, where $|A \cap n| = |A \cap \{1,2, \cdots, n\}|$ for any $n \in \mathbb{N}$. A finitely additive measure defined on $\mathcal{P}(\mathbb{N})$ extending the asymptotic density is called a density measure, and let us denote the set of all density measures by $\mathcal{C}$. 
In general, if a finitely additive probability measure space $(\mathbb{N}, \mathcal{P}(\mathbb{N}), \mu)$ are given, one can obtain a normalized positive linear functional $\varphi$ on $l^{\infty}$ of the Banach space of all real-valued bounded functions on $\mathbb{N}$ by the integral with respect to $\mu$. Oppositely, If a normalized positive linear functional $\varphi$ on $l^{\infty}$ is given, we get a finitely additive probability measure $\mu$ on $\mathcal{P}(\mathbb{N})$ by $\mu(A) = \varphi(I_A)$, where $I_A$ is the characteristic function of $A \in \mathcal{P}(\mathbb{N})$. In what follows, we will identify these two notions accordingly.
 
  It is known that the linear functionals corresponding to the density measures are precisely the linear functionals that extend Ces\`{a}ro mean ([7, Theorem 3.3]). Namely, it consists of those functionals $\varphi$ for which 
\[\varphi(f) = \lim_n \frac{1}{n} \sum_{i=1}^n f(i) \]
holds if this limit exists. We will deal with a certain class of such linear functionals satisfying the following condition:
\[\varphi(f) \le \limsup_{n \to \infty} \frac{1}{n} \sum_{i=1}^{n} f(i)  \]
for every $f \in l^{\infty}$. We denote by $\mathcal{C}_0$ the set of all density measures with this property. It is known that $\mathcal{C}_0$ is properly contained in $\mathcal{C}$. In particular, we consider density measures in $\mathcal{C}_0$ expressed by
\[\varphi^{\mathcal{U}}(f) = \mathcal{U}-\lim_{n} \frac{1}{n} \sum_{i=1}^n f(i), \]
where $\mathcal{U}$ is any free ultrafilter on $\mathbb{N}$ and the limit above is the limit of $f$ along an ultrafilter $\mathcal{U}$ (See [5] for more details of this notion). We denotes by $\tilde{\mathcal{C}_0}$ the set of all such density measures.  In [6] we have investigated the space $\tilde{\mathcal{C}_0}$ and proved that it is a weak-$^*$ compact subset of $\mathcal{C}_0$ and $ex(\mathcal{C}_0) \subset \tilde{\mathcal{C}_0}$ holds [6, Theorem 2.1], where $ex(\mathcal{C}_0)$ denotes the extreme points of $\mathcal{C}_0$. This means that by the Krein-Milman theorem each density measure $\varphi$ in $\mathcal{C}_0$ can be expressed in the form
\[\varphi(f) = \int_{\tilde{\mathcal{C}_0}} \phi(f)d\mu(\phi), \quad f \in l^{\infty}\]
for some probability measure $\mu$ on $\tilde{\mathcal{C}_0}$. Although we are mainly interested in density measures in $\tilde{\mathcal{C}_0}$, we will deal with this general form of density measures for a certain class of probability measures $\mu$ on $\tilde{\mathcal{C}_0}$ in section 4.

 Also we have shown that  $\tilde{\mathcal{C}_0}$ is homeomorphic to a space $\Omega^*$ on which a continuous flow induced by usual addition on the real line $\mathbb{R}$ can be defined in a natural way. This flow plays an very important role in studying density measures in $\tilde{\mathcal{C}_0}$ throughout the paper.

 The paper is organized as follows: Section 2 contains an accurate definition of the space $\Omega^*$ and the flow on it and we establish the correspondence of a element of $\Omega^*$ to a density measure in $\tilde{\mathcal{C}_0}$ by giving a explicit formula in Theorem 2.5. Also we introduce the two types of notions of absolutely continuity and singularity respectively. These notions for density measures in $\tilde{\mathcal{C}_0}$ are studied in detail in section 3. Section 4 deals with several applications of the results of section 3. In particular, the additive property of density measures in $\mathcal{C}_0$ will be studied.
\medskip

\section{Preliminaries}
 Following [3, Chapter 6] we introduce the notions of absolute continuity and singularity for finitely additive measures. From now on a $measure$ will mean a finitely additive probability measure on $\mathcal{P}(\mathbb{N})$.

In the following, let $\mu$ and $\nu$ be any two measures. 

\begin{dfn}
We say that $\nu$ is absolutely continuous with respect to $\mu$ if for any $\varepsilon > 0$, there exists $\delta > 0$ such that $\nu(A) < \varepsilon$ whenever $\mu(A) < \delta$, where $A \in \mathcal{P}(\mathbb{N})$. In this case, we write $\nu \ll \mu$.
\end{dfn}

We can consider a weak version of absolute continuity in a natural way as follows:

\begin{dfn}
We say that $\nu$ is weakly absolutely continuous with respect to $\mu$ if $\nu(A) = 0$ whenever $\mu(A) = 0$, where $A \in \mathcal{P}(\mathbb{N})$. In this case, we write $\nu \prec \mu$.
\end{dfn}

Next we define the notion of singularity.

\begin{dfn}
We say that $\mu$ and $\nu$ are singular if for every $\varepsilon > 0$, there exists a set $D \in \mathcal{P}(\mathbb{N})$ such that $\mu(D) < \varepsilon$ and $\nu(D^c) < \varepsilon$.
In this case, we write $\mu \perp \nu$.
\end{dfn}

Also we can define a strong version of singularity in a way that seemed natural.

\begin{dfn}
We say that $\mu$ and $\nu$ are strongly singular if there exists a set $D \in \mathcal{P}(\mathbb{N})$ such that $\mu(D) = 0$ and $\nu(D^c) = 0$. In this case, we write $\mu \ \rotatebox{90}{$\vDash$} \ \nu$.
\end{dfn}

These notions can be formulated in the context of the Stone-\v{C}ech compactification $\beta\mathbb{N}$ of $\mathbb{N}$. Let $\mu$ be a measure. Observe that one can identify each element $A$ of $\mathcal{P}(\mathbb{N})$ with a clopen subset $\overline{A}$ in $\beta\mathbb{N}$, where $\overline{A}$ denotes the closure of $A$ in $\beta\mathbb{N}$. Let us denote $\overline{\mathcal{P}(\mathbb{N})} = \{\overline{A} : A \in \mathcal{P} \}$ and define a measure $\hat{\mu}$ on it by $\hat{\mu}(\overline{A}) = \mu(A)$.
Since any union of disjoint family of clopen subsets can not be a clopen subset, $\hat{\mu}$ is countably additive on $\overline{\mathcal{P}(\mathbb{N})}$. Hence we can extend it to a countable additive measure on the $\sigma$-algebra generated by $\overline{\mathcal{P}(\mathbb{N})}$, that is, the Baire $\sigma$-algebra of $\beta\mathbb{N}$. This can also be extended uniquely to the Borel $\sigma$-algebra $\mathcal{B}(\beta\mathbb{N})$ of $\beta\mathbb{N}$ as a countable additive measure. We still denote it by $\hat{\mu}$. We denote by supp $\mu$ the support of $\hat{\mu}$ in $\beta\mathbb{N}$.

 The following results are easy to prove, and we omit proofs.

\begin{thm}
$\nu \ll \mu$ if and only if $\hat{\nu} \ll \hat{\mu}$, where the latter represents the existing notion of absolute continuity for countably additive measures.
\end{thm}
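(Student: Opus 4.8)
The plan is to work throughout with the standard $\varepsilon$-$\delta$ reformulation of absolute continuity for finite countably additive measures: for finite measures on $\mathcal{B}(\beta\mathbb{N})$, the relation $\hat{\nu} \ll \hat{\mu}$ (in the null-set sense) holds if and only if for every $\varepsilon > 0$ there is a $\delta > 0$ with $\hat{\nu}(E) < \varepsilon$ whenever $\hat{\mu}(E) < \delta$. Using this form on the Borel side makes the two notions formally parallel, so that the whole content of the theorem is the passage between the clopen sets $\overline{A}$ and general Borel sets. Two structural facts do the work. First, the clopen subsets of $\beta\mathbb{N}$ are exactly the sets $\overline{A}$ with $A \in \mathcal{P}(\mathbb{N})$; they form a basis of the topology and a Boolean algebra, with $\overline{A \cup B} = \overline{A} \cup \overline{B}$. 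Second, $\hat{\mu}$ and $\hat{\nu}$, being finite Borel measures on the compact Hausdorff space $\beta\mathbb{N}$, are regular (inner and outer).

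The reverse implication is immediate. Assuming the $\varepsilon$-$\delta$ condition for $\hat{\nu}, \hat{\mu}$ on all Borel sets and specializing to the clopen sets $E = \overline{A}$, the defining identities $\hat{\mu}(\overline{A}) = \mu(A)$ and $\hat{\nu}(\overline{A}) = \nu(A)$ turn it into exactly the condition of Definition 2.1, i.e. $\nu \ll \mu$.

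For the forward implication, fix $\varepsilon > 0$ and take $\delta > 0$ from $\nu \ll \mu$, so that $\mu(A) < \delta$ forces $\nu(A) < \varepsilon$. Let $E$ be Borel with $\hat{\mu}(E) < \delta$. By outer regularity of $\hat{\mu}$, choose an open $U \supseteq E$ with $\hat{\mu}(U) < \delta$. I would then estimate $\hat{\nu}(E)$ through inner regularity of $\hat{\nu}$: for any compact $K \subseteq E$, the clopen basis covers $U$, so finitely many basic clopen sets contained in $U$ already cover $K$; their union is again a clopen set $\overline{A} \subseteq U$ with $K \subseteq \overline{A}$. Then $\mu(A) = \hat{\mu}(\overline{A}) \le \hat{\mu}(U) < \delta$ yields $\nu(A) < \varepsilon$, hence $\hat{\nu}(K) \le \hat{\nu}(\overline{A}) = \nu(A) < \varepsilon$. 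Taking the supremum over all such $K$ gives $\hat{\nu}(E) \le \varepsilon$, which is precisely the $\varepsilon$-$\delta$ condition for $\hat{\nu} \ll \hat{\mu}$.

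The only genuine obstacle is this forward step, where an arbitrary Borel set must be squeezed between a compact set and a single clopen set of controlled $\hat{\mu}$-mass; it is exactly the combination of the two regularity properties with the compactness argument that reduces an open cover to a finite, hence clopen, union that makes this possible. One should also verify that the regularity of $\hat{\mu}$ and $\hat{\nu}$ is indeed available for the Borel extensions constructed in the preceding paragraph of the paper, which follows from the general theory of finite Borel measures on compact Hausdorff spaces (equivalently, from the Riesz representation theorem).
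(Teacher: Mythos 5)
Your proof is correct: the paper explicitly omits the proof of this theorem as ``easy to prove,'' and your argument --- reducing null-set absolute continuity to the $\varepsilon$--$\delta$ formulation for finite measures, then passing between Borel and clopen sets via outer regularity of $\hat{\mu}$, inner regularity of $\hat{\nu}$, and compactness to replace an open set by a single clopen $\overline{A}$ --- is exactly the standard argument the paper intends. The one point to state carefully is the regularity hedge at the end: not every finite Borel measure on a compact Hausdorff space is regular, but the extension constructed in Section 2 is the canonical regular one (e.g.\ via the Riesz representation of the associated functional), which is what your appeal to that theorem correctly supplies.
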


\begin{thm}
$\nu \prec \mu$ if and only if supp $\nu$ $\subseteq$ supp $\mu$.
\end{thm}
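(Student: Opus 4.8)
The plan is to reduce the statement to a concrete description of the support in terms of the clopen basis of $\beta\mathbb{N}$ and then to exploit compactness. Recall that the sets $\overline{A}$, $A \in \mathcal{P}(\mathbb{N})$, form a basis of clopen sets for $\beta\mathbb{N}$ and satisfy $\overline{A^c} = \beta\mathbb{N}\setminus\overline{A}$, $\overline{A\cap B} = \overline{A}\cap\overline{B}$, and $\overline{S} = \emptyset$ iff $S = \emptyset$; moreover $\hat{\mu}(\overline{A}) = \mu(A)$ by construction. First I would establish the identity
\[ \mathrm{supp}\,\mu = \bigcap\{\overline{B} : B \in \mathcal{P}(\mathbb{N}),\ \mu(B) = 1\}. \]
Indeed, a point $x$ lies outside the support iff some basic neighborhood $\overline{A} \ni x$ has $\hat{\mu}(\overline{A}) = \mu(A) = 0$; setting $B = A^c$, this is the same as saying $x \notin \overline{B}$ for some $B$ with $\mu(B) = 1$. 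Negating and intersecting over all such $B$ gives the displayed identity. The value of this reformulation is that it describes the support purely through the finitely additive values $\mu(B)$, bypassing any regularity or $\tau$-additivity considerations for $\hat{\mu}$ on the non-metrizable space $\beta\mathbb{N}$.

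For the forward implication, assume $\nu \prec \mu$. If $\mu(B) = 1$ then $\mu(B^c) = 0$, so $\nu(B^c) = 0$ and hence $\nu(B) = 1$; thus $\{B : \mu(B) = 1\} \subseteq \{B : \nu(B) = 1\}$. Intersecting the corresponding clopen sets over the larger index family yields a smaller set, so by the formula above $\mathrm{supp}\,\nu \subseteq \mathrm{supp}\,\mu$.

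For the converse, assume $\mathrm{supp}\,\nu \subseteq \mathrm{supp}\,\mu$ and let $A$ satisfy $\mu(A) = 0$. Then $\mu(A^c) = 1$, so $\mathrm{supp}\,\mu \subseteq \overline{A^c}$ and therefore $\overline{A}\cap\mathrm{supp}\,\nu \subseteq \overline{A}\cap\mathrm{supp}\,\mu = \emptyset$. Writing the support of $\nu$ via the formula, this says that the family of clopen sets $\{\overline{A}\cap\overline{B} = \overline{A\cap B} : \nu(B) = 1\}$ has empty intersection. Here is the crucial step: by compactness of $\beta\mathbb{N}$ (the finite intersection property), finitely many of them already have empty intersection, say $\overline{A\cap B_1}\cap\cdots\cap\overline{A\cap B_k} = \overline{A\cap B_1\cap\cdots\cap B_k} = \emptyset$ with $\nu(B_i) = 1$. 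Hence $A\cap B = \emptyset$ where $B = B_1\cap\cdots\cap B_k$, and finite additivity gives $\nu(B) = 1$, so $\nu(A) \le \nu(B^c) = 0$. This proves $\nu \prec \mu$.

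I expect the main obstacle to be precisely this passage from an intersection over the (generally uncountable) family of full-measure sets to a finite subfamily: compactness, together with finite additivity of $\nu$, is what makes it work, and it is the reason the statement holds without any assumption that $\hat{\mu}$ behaves regularly on all open sets of $\beta\mathbb{N}$. Everything else is bookkeeping with the clopen basis and the identity $\hat{\mu}(\overline{A}) = \mu(A)$.
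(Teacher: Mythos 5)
Your proof is correct. The paper offers no proof to compare against: Theorem 2.2 is one of the four preliminary results dismissed with ``easy to prove, and we omit proofs.'' Your argument is the natural one and fills that gap cleanly: the identity $\mathrm{supp}\,\mu = \bigcap\{\overline{B} : \mu(B)=1\}$ is verified correctly from the clopen basis and $\hat{\mu}(\overline{A})=\mu(A)$, the forward direction is immediate from it, and in the converse you correctly identify and handle the one delicate point --- passing from $\overline{A}\cap\mathrm{supp}\,\nu=\emptyset$ to $\nu(A)=0$ via the finite intersection property and finite additivity, which indeed avoids any appeal to regularity or $\tau$-additivity of $\hat{\nu}$ on the non-metrizable space $\beta\mathbb{N}$ (equivalently, one could cover the compact clopen set $\overline{A}$ by finitely many basic null neighborhoods; this is the same compactness argument in different clothing).
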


\begin{thm}
$\nu \perp \mu$ if and only if $\hat{\nu} \perp \hat{\mu}$, where the latter represents the existing notion of singularity for countably additive measures.
\end{thm}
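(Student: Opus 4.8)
The plan is to transport everything through the Boolean isomorphism $A \mapsto \overline{A}$ of $\mathcal{P}(\mathbb{N})$ onto the algebra of clopen subsets of $\beta\mathbb{N}$, under which $\mu(A) = \hat{\mu}(\overline{A})$, finite Boolean operations are preserved, and in particular $\overline{A^c} = (\overline{A})^c$. I will freely use that $\hat{\mu}$ and $\hat{\nu}$ are regular Borel measures, being the canonical extensions of Baire measures on the compact Hausdorff space $\beta\mathbb{N}$, and I write $\lambda = \hat{\mu} + \hat{\nu}$ for their finite sum.

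The crucial auxiliary fact is that clopen sets are dense in the measure algebra of $\lambda$: for every Borel $E \subseteq \beta\mathbb{N}$ and every $\varepsilon > 0$ there is a set $D \in \mathcal{P}(\mathbb{N})$ with $\lambda(E \triangle \overline{D}) < \varepsilon$. This follows from zero-dimensionality together with regularity: choose a compact $K$ and an open $U$ with $K \subseteq E \subseteq U$ and $\lambda(U \setminus K) < \varepsilon$, cover $K$ by basic clopen sets contained in $U$, and extract a finite subcover whose union is the required clopen set $\overline{D}$. Alternatively, the Borel sets admitting such approximations form a $\sigma$-algebra containing all clopen sets, hence all of $\mathcal{B}(\beta\mathbb{N})$. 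I expect this approximation step to be the only real obstacle; the rest is bookkeeping.

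Assume first $\hat{\nu} \perp \hat{\mu}$, witnessed by a Borel set $E$ with $\hat{\mu}(E) = 0$ and $\hat{\nu}(E^c) = 0$. Given $\varepsilon > 0$, pick $D$ with $\lambda(E \triangle \overline{D}) < \varepsilon$. Since $\hat{\mu}(E) = 0$ we get $\mu(D) = \hat{\mu}(\overline{D}) = \hat{\mu}(\overline{D} \setminus E) \le \hat{\mu}(E \triangle \overline{D}) < \varepsilon$, and since $\hat{\nu}(E^c) = 0$ we get $\nu(D^c) = \hat{\nu}\big((\overline{D})^c\big) = \hat{\nu}(E \setminus \overline{D}) \le \hat{\nu}(E \triangle \overline{D}) < \varepsilon$. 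Thus $D$ witnesses the required finitely additive singularity, i.e.\ $\nu \perp \mu$.

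Conversely assume $\nu \perp \mu$. For each $n$ choose $D_n \in \mathcal{P}(\mathbb{N})$ with $\mu(D_n) < 2^{-n}$ and $\nu(D_n^c) < 2^{-n}$, and set $E = \limsup_n \overline{D_n} = \bigcap_N \bigcup_{n \ge N} \overline{D_n}$. Since $\sum_n \hat{\mu}(\overline{D_n}) = \sum_n \mu(D_n) < \infty$, the Borel--Cantelli lemma gives $\hat{\mu}(E) = 0$. For the complement, $E^c = \liminf_n (\overline{D_n})^c \subseteq \limsup_n (\overline{D_n})^c$, and $\sum_n \hat{\nu}\big((\overline{D_n})^c\big) = \sum_n \nu(D_n^c) < \infty$ gives $\hat{\nu}\big(\limsup_n (\overline{D_n})^c\big) = 0$ by Borel--Cantelli, whence $\hat{\nu}(E^c) = 0$. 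Therefore $E$ witnesses $\hat{\nu} \perp \hat{\mu}$, completing the proof.
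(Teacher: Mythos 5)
Your proof is correct, and there is nothing in the paper to compare it against: Theorem 2.3 is one of the four equivalences the author states with the remark that the proofs are easy and omitted. Your argument is the natural one, and both directions check out: the forward direction via clopen approximation in the measure algebra of $\lambda = \hat{\mu} + \hat{\nu}$ (using regularity of the Borel extensions and zero-dimensionality of $\beta\mathbb{N}$), and the converse via Borel--Cantelli applied to witnesses $D_n$ with $\mu(D_n) < 2^{-n}$ and $\nu(D_n^c) < 2^{-n}$, which needs only countable additivity. One caveat, affecting only an aside: your ``alternatively'' justification of the approximation lemma does not work as stated. The Borel sets approximable by clopen sets do form a $\sigma$-algebra containing the clopen sets, but the $\sigma$-algebra generated by the clopen subsets of $\beta\mathbb{N}$ is the \emph{Baire} $\sigma$-algebra, which is strictly smaller than $\mathcal{B}(\beta\mathbb{N})$ (for instance, a singleton $\{p\}$ with $p \in \mathbb{N}^*$ is closed but not Baire, as ultrafilter points are not $G_\delta$), so this route only yields approximation of Baire sets. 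To recover all Borel sets you must either use the regularity argument you gave first --- which is valid, since the paper's $\hat{\mu}$ is precisely the regular Borel extension of a Baire measure on a compact Hausdorff space --- or supplement the $\sigma$-algebra argument with the fact that every Borel set differs from a Baire set by a $\lambda$-null set, which again rests on regularity. Since the regularity argument is your primary one, the proof stands; I would simply delete or repair the ``alternatively'' sentence.
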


\begin{thm}
$\mu \ \rotatebox{90}{$\vDash$} \ \nu$ if and only if supp $\nu$ $\cap$ supp $\mu$ = $\emptyset$.
\end{thm}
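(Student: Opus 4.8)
The plan is to translate the combinatorial condition defining strong singularity into a statement about the clopen algebra of $\beta\mathbb{N}$ and then read it off from the geometry of the supports. Recall that for $A \subseteq \mathbb{N}$ one has $\overline{A^c} = \beta\mathbb{N} \setminus \overline{A}$, so that $\overline{A}$ is clopen and $\hat{\mu}(\overline{A}) + \hat{\mu}(\overline{A^c}) = \hat{\mu}(\beta\mathbb{N}) = \mu(\mathbb{N}) = 1$. Hence the defining condition $\mu(D) = 0$ and $\nu(D^c) = 0$ is equivalent to $\hat{\mu}(\overline{D}) = 0$ and $\hat{\nu}(\overline{D}) = 1$; in other words, $\mu \ \rotatebox{90}{$\vDash$} \ \nu$ holds if and only if there is a clopen set $\overline{D}$ that is $\hat{\mu}$-null and $\hat{\nu}$-full.

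For the forward implication I would argue directly. Suppose such a $D$ exists. Since $\overline{D}$ is open with $\hat{\mu}(\overline{D}) = 0$, no point of $\overline{D}$ can lie in $\mathrm{supp}\,\mu$ (each such point has $\overline{D}$ itself as a null neighborhood), so $\mathrm{supp}\,\mu \subseteq \overline{D^c}$. Symmetrically, from $\hat{\nu}(\overline{D^c}) = 0$ and the openness of $\overline{D^c}$ we get $\mathrm{supp}\,\nu \subseteq \overline{D}$. As $\overline{D}$ and $\overline{D^c}$ are disjoint, $\mathrm{supp}\,\mu \cap \mathrm{supp}\,\nu = \emptyset$.

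For the converse I would use that $\beta\mathbb{N}$ is a zero-dimensional compact Hausdorff space whose clopen sets are precisely the sets $\overline{A}$, $A \subseteq \mathbb{N}$. Given the disjoint closed sets $\mathrm{supp}\,\mu$ and $\mathrm{supp}\,\nu$, zero-dimensionality lets me separate them by a clopen set, so there is $D \subseteq \mathbb{N}$ with $\mathrm{supp}\,\nu \subseteq \overline{D}$ and $\mathrm{supp}\,\mu \subseteq \overline{D^c}$. It then remains to pass from ``disjoint from the support'' to ``null'', and here I would avoid invoking general Radon regularity by exploiting compactness: the clopen set $\overline{D^c}$ is closed, hence compact, and disjoint from $\mathrm{supp}\,\nu$, so each of its points has a $\hat{\nu}$-null open neighborhood; a finite subcover gives $\hat{\nu}(\overline{D^c}) = 0$, i.e. $\nu(D^c) = 0$, and likewise $\hat{\mu}(\overline{D}) = 0$, i.e. $\mu(D) = 0$. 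Thus $\mu \ \rotatebox{90}{$\vDash$} \ \nu$.

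The main obstacle is the converse, and within it the only nontrivial input is the separation step: one must know that disjoint closed subsets of $\beta\mathbb{N}$ can be separated by a clopen set and that every clopen set arises as $\overline{D}$ for some $D \subseteq \mathbb{N}$. Once this structural fact about $\beta\mathbb{N}$ is in hand, the compactness argument turns the purely set-theoretic separation into the measure-theoretic conditions $\mu(D) = 0$ and $\nu(D^c) = 0$ with no extra regularity hypotheses, completing both directions.
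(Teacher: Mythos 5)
Your proof is correct, and since the paper explicitly omits the proof of this theorem as routine, your argument is essentially the intended one: identify strong singularity with the existence of a clopen $\overline{D}$ that is $\hat{\mu}$-null and $\hat{\nu}$-full, use zero-dimensionality of $\beta\mathbb{N}$ (every clopen set being $\overline{A}$ for some $A \subseteq \mathbb{N}$) to separate the disjoint supports, and use compactness with a finite subcover to convert ``disjoint from the support'' into ``null''. The compactness step is a nice touch, as it correctly sidesteps the regularity issue that a union of null open sets need not be null in a non-metrizable compactum.
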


 Next we define a compact space $\Omega^*$ and a continuous flow on it. At first, we shall construct a homeomorphism on $\beta\mathbb{N} \setminus \mathbb{N}$, here denoted by $\mathbb{N}^*$, onto itself as the following. Let $\tau_0$ be the right translation on $\mathbb{N}$, i.e., $\tau_0(n) = n+1$ for each $n \in \mathbb{N}$. Then we extend it continuously to $\tau : \beta\mathbb{N} \rightarrow \beta\mathbb{N}$. It is easy to see that the restriction of $\tau$ to $\mathbb{N}^*$ is a homeomorphism onto $\mathbb{N}^*$. Next we extend $\tau$ to a continuous flow. Construct the space $\Omega^*$ from the product space $\mathbb{N}^* \times [0,1]$ by identifying the pairs of points $(\omega, 1)$ and $(\tau \omega, 0)$ for all $\omega \in \mathbb{N}^*$, and define for each $s \in \mathbb{R}$ the homeomorphism $\phi^s : \Omega^* \rightarrow \Omega^*$ by
\[\phi^s(\omega, t) = (\tau^{[t+s]}\omega, t+s-[t+s]), \quad  (\omega \in \mathbb{N}^*, t \in [0,1]), \]
where $[t+s]$ denotes the largest integer not exceeding $t+s$. We use the following notations for the orbit of $\eta$ in $\Omega^*$ for $\phi^s$: $o_+(\eta) = \{\phi^s(\eta) : s \ge 0\}, \ o_-(\eta) = \{\phi^{-s}(\eta) : s \ge 0\}$
, $o(\eta) = \{\phi^s(\eta) : s \in \mathbb{R}\}$. Furthermore we denote the closures in $\Omega^*$ of these orbits by $\overline{o}_+(\eta), \overline{o}_-(\eta), \overline{o}(\eta)$, respectively. Also we use similar notations for the orbit of $\omega$ in $\mathbb{N}^*$ for $\tau$:  $o_+(\omega) = \{\tau^n \omega : n =0,1,2, \cdots.\}, \ o_-(\omega) = \{\tau^{-n} \omega : n=0,1,2, \cdots.\}, \ o(\omega) = \{\tau^n \omega : n \in \mathbb{Z} \}$, and also $\overline{o}_+(\omega), \overline{o}_-(\omega), \overline{o}(\omega)$ represents their closures in $\mathbb{N}^*$ respectively. 

\medskip
Arguments after section 3 is developed by using the following result which gives a certain expression to the elements of $\tilde{\mathcal{C}_0}$. See [6] for a proof.

\begin{thm}
Let $\Omega^*$ be as above, and $\tilde{\mathcal{C}_0}$ be equipped with the weak-$^*$ topology. We define the mapping $\Phi: \Omega^* \rightarrow \tilde{\mathcal{C}_0}$ as follows: Let $\eta = (\omega, t) \in \Omega^*$ and $\theta = 2^t$, and let us denote the density measure $\Phi(\eta)$ by $\nu_{\eta}$, then we set
\[\nu_{\eta}(A) = \omega-\lim_n \frac{|A \cap [\theta \cdot 2^n]|}{\theta \cdot 2^n}, \quad A \in \mathcal{P}(\mathbb{N}). \]
Then $\Phi$ is a homeomorphism.
\end{thm}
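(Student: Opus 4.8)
The plan is to verify that $\Phi$ is a well-defined continuous bijection and then invoke compactness. Since $\Omega^*$ is a quotient of the compact space $\mathbb{N}^* \times [0,1]$ it is compact, and $\tilde{\mathcal{C}_0} \subseteq (l^\infty)^*$ is Hausdorff in the weak-$^*$ topology; hence any continuous bijection $\Phi$ is automatically a homeomorphism, and the theorem reduces to these three properties. Throughout I extend the formula to $f \in l^\infty$ by $\nu_\eta(f) = \omega\text{-}\lim_n \frac{1}{\theta 2^n}\sum_{i=1}^{[\theta 2^n]} f(i)$, and I write $G_f(x) = \frac1x\sum_{i=1}^{[x]}f(i)$, so that $\nu_{(\omega,t)}(f) = \omega\text{-}\lim_n G_f(2^{n+t})$. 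Well-definedness first requires $\nu_\eta \in \tilde{\mathcal{C}_0}$: the map $n \mapsto [\theta 2^n]$ pushes $\omega$ forward to a free ultrafilter $\mathcal{U}$ on $\mathbb{N}$, and functoriality of ultrafilter limits gives $\nu_\eta = \varphi^{\mathcal{U}}$. It also requires compatibility with the identification $(\omega,1)\sim(\tau\omega,0)$; this follows from the shift relation $(\tau\omega)\text{-}\lim_n h(n) = \omega\text{-}\lim_n h(n+1)$ applied to $h(n) = G_f(2^n)$, since $2^{n+1} = 2\cdot 2^n$.

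The analytic heart is a single uniform estimate. For $\|f\|_\infty \le 1$ and $1 \le x \le x'$ one has $|G_f(x') - G_f(x)| \le 2(x'-x)/x'$; taking $x = 2^{n+t}$, $x' = 2^{n+t'}$ yields
\[\sup_n \bigl| G_f(2^{n+t'}) - G_f(2^{n+t}) \bigr| \le 2\bigl(1 - 2^{-|t-t'|}\bigr),\]
which tends to $0$ as $t' \to t$, uniformly in $n$. Continuity of $\Phi$ now follows: for fixed $t$ the map $\omega \mapsto \omega\text{-}\lim_n G_f(2^{n+t})$ is continuous on $\mathbb{N}^*$ (it is the canonical extension of a bounded sequence to $\beta\mathbb{N}$), and the displayed bound upgrades this to joint continuity in $(\omega,t)$; together with the seam compatibility above and the universal property of the quotient, $\eta \mapsto \nu_\eta(f)$ is continuous on $\Omega^*$ for every $f$, which is exactly weak-$^*$ continuity of $\Phi$.

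For injectivity I recover $t$ and $\omega$ separately from $\nu_\eta$. To recover $t$, I test against the sets $A_{\alpha,\beta} = \{ i : \{\log_2 i\} \in [\alpha,\beta)\}$: for these sets $G_{I_{A_{\alpha,\beta}}}(2^{n+t})$ converges in the ordinary sense as $n \to \infty$, so $\nu_{(\omega,t)}(A_{\alpha,\beta})$ equals an explicit function $\rho_{\alpha,\beta}(t)$ independent of $\omega$; a routine computation shows the family $\{\rho_{\alpha,\beta}\}$ separates the points of the circle $\mathbb{R}/\mathbb{Z}$ (for instance $\rho_{0,\beta}$ is unimodal with peak at $\beta$), so equality of the measures forces $t_1 \equiv t_2 \pmod 1$. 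Assuming then $t_1 = t_2 = t$, I recover $\omega$ by testing against the sets $A_S$ obtained by completely filling the block $[\theta 2^{n-1}, \theta 2^n)$ exactly when $n \in S$. A direct count gives $\nu_\eta(A_S) = \Psi_\omega(S) := \sum_{j\ge 0} 2^{-j-1} I[S+j\in\omega]$, and the key observation is the recursion $\Psi_\omega(S) = \tfrac12 I[S\in\omega] + \tfrac12 \Psi_\omega(S+1)$, i.e. $I[S\in\omega] = 2\Psi_\omega(S) - \Psi_\omega(S+1)$. Thus membership in $\omega$ is reconstructible from the values $\nu_\eta(A_S)$, and $\nu_{\eta_1} = \nu_{\eta_2}$ forces $\omega_1 = \omega_2$.

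Finally, surjectivity reuses the uniform estimate. Given a free ultrafilter $\mathcal{U}$, set $t^* = \mathcal{U}\text{-}\lim_m \{\log_2 m\} \in [0,1]$ and let $\omega = \sigma_*\mathcal{U}$ be the pushforward under $\sigma(m) = [\log_2 m]$, which is free since $\sigma(m) \to \infty$. Writing each $m$ as $2^{\sigma(m)+\{\log_2 m\}}$ and applying the bound with $t = \{\log_2 m\} \to t^*$ shows $\mathcal{U}\text{-}\lim_m G_f(m) = \mathcal{U}\text{-}\lim_m G_f(2^{\sigma(m)+t^*})$, and functoriality of the pushforward turns the right-hand side into $\omega\text{-}\lim_n G_f(2^{n+t^*}) = \nu_{(\omega,t^*)}(f)$; hence $\varphi^{\mathcal{U}} = \nu_{(\omega,t^*)}$. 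I expect the recovery of $\omega$ in the injectivity step to be the main obstacle: because the scales $\theta 2^n$ grow only geometrically with ratio $2$, the density at scale $\theta 2^n$ necessarily mixes in a fixed geometric tail of the earlier blocks, so no thresholding reads off membership directly; the generating-function identity above is what disentangles this tail and makes $\omega$ recoverable.
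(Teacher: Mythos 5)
The paper itself offers no proof of Theorem 2.5 --- it defers to reference [6] --- so there is no in-paper argument to compare yours against; judged on its own terms, your proof is structurally sound and complete in its main ideas. The reduction to a continuous bijection (compact $\Omega^*$ onto Hausdorff $\tilde{\mathcal{C}_0}$) is correct; well-definedness and surjectivity via pushforward ultrafilters along $n \mapsto [\theta 2^n]$ and $m \mapsto [\log_2 m]$ are correct (for well-definedness you should note explicitly that the theorem's normalization $\theta 2^n$ differs from the pushforward's $[\theta 2^n]$ by a ratio tending to $1$, which a free ultrafilter limit ignores); the seam relation $(\tau\omega)\text{-}\lim_n h(n) = \omega\text{-}\lim_n h(n+1)$ plus the universal property of the quotient handles the identification correctly; and the two-step injectivity is the genuinely nontrivial part, done right: the sets $A_{\alpha,\beta}$ have honest (ultrafilter-independent) limits $\rho_{\alpha,\beta}(t)$ that separate $t$ modulo $1$ (e.g.\ taking $\beta = t$ in $\rho_{0,\beta}$ forces $2^{1-t}(2^t-1) = 2^{1-t'}(2^t-1)$, hence $t = t'$), and the inversion $I[S \in \omega] = 2\Psi_\omega(S) - \Psi_\omega(S+1)$ correctly disentangles the geometric tail that, as you observe, prevents any direct thresholding from reading off membership in $\omega$. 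The interchange of $\omega$-limit with the sum defining $\Psi_\omega(S)$ is justified by the uniformly convergent geometric tail, though you should say so.

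One slip needs fixing: your ``single uniform estimate'' is false as stated. Take $f \equiv 1$, $x = 2 - \varepsilon$, $x' = 2$: then $G_f(x) = 1/x \approx 1/2$ while $G_f(x') = 1$, so $|G_f(x') - G_f(x)| \approx 1/2$, which is not $\le 2(x'-x)/x' = \varepsilon$. The integer part costs an additive term: splitting $G_f(x') - G_f(x)$ as $\bigl(S([x'])-S([x])\bigr)/x' + S([x])\bigl(1/x' - 1/x\bigr)$ with $S(k) = \sum_{i \le k} f(i)$ gives $|G_f(x')-G_f(x)| \le \bigl(2(x'-x)+1\bigr)/x'$, hence $|G_f(2^{n+t'}) - G_f(2^{n+t})| \le 2\bigl(1-2^{-|t-t'|}\bigr) + 2^{-n}$, and the $\sup_n$ in your display must be a $\limsup_n$. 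This does no real damage: every application of the estimate sits inside a limit along a free ultrafilter --- the equicontinuity step for joint continuity, and the surjectivity computation where $\sigma(m) \to \infty$ along $\mathcal{U}$ --- so the vanishing $2^{-n}$ term is harmless and all your conclusions survive verbatim. But restate the lemma with the corrected bound.
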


 For simplicity, in the case of $t=0$, we write $\nu_{\omega}$.
\medskip

\section{Absolute continuity and singularity}
In this section we shall prove the following results:

\begin{thm}
For any two elements in $\tilde{\mathcal{C}_0}$, one is absolutely continuous with respect to the other or they are singular.
\end{thm}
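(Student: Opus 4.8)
The plan is to prove the stated trichotomy directly: for $\eta,\zeta\in\Omega^*$, either $\nu_\eta\ll\nu_\zeta$, or $\nu_\zeta\ll\nu_\eta$, or $\nu_\eta\perp\nu_\zeta$. By Theorem 2.1 and Theorem 2.3 it is equivalent to argue with the associated countably additive Borel measures $\hat{\nu}_\eta,\hat{\nu}_\zeta$ on $\beta\mathbb{N}$, for which the Lebesgue decomposition $\hat{\nu}_\zeta=\hat{\nu}_\zeta^{a}+\hat{\nu}_\zeta^{s}$ (absolutely continuous part plus singular part, relative to $\hat{\nu}_\eta$) is always available. The content of the theorem is then that these two parts cannot genuinely coexist: if $\nu_\eta$ and $\nu_\zeta$ are not singular, i.e. $\hat{\nu}_\zeta^{a}\neq0$, then in fact one measure is absolutely continuous with respect to the other. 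The whole task is thus to upgrade a nontrivial overlap into a one-sided domination, and the mechanism for this is the self-similar scaling built into the flow $\phi^s$.

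The computational engine I would establish first is a forward-domination estimate coming straight from the explicit formula of Theorem 2.5. Writing $\eta=(\omega,t)$ and comparing the normalized counting measures $m_N(A)=|A\cap N|/N$ at the scales $N=2^{t}\cdot2^{n}$, the monotonicity $|A\cap N|/N\le(N'/N)\,|A\cap N'|/N'$ for $N\le N'$, applied to neighbouring scales (for which $N'/N\le2$) and iterated, gives after passing to the $\omega$-limit the measure inequality
\[\nu_\eta(A)\ \le\ 2^{\,r}\,\nu_{\phi^r\eta}(A)\qquad(A\in\mathcal{P}(\mathbb{N}),\ r\ge0).\]
Since the constant $2^{r}$ is finite for each fixed $r$, this yields $\nu_\eta\ll\nu_{\phi^r\eta}$ in the sense of Definition 2.1 (take $\delta=\varepsilon\,2^{-r}$). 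Consequently any two points on a common orbit are comparable: if $\zeta=\phi^r\eta$ with $r\ge0$ then $\nu_\eta\ll\nu_\zeta$, and with $r\le0$ then $\nu_\zeta\ll\nu_\eta$. This disposes of every pair with $\zeta\in o(\eta)$ and isolates the essential case $\zeta\notin o(\eta)$.

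For the remaining pairs I would combine this estimate with the orbit structure and the weak-$^*$ continuity of $\Phi$. When $\zeta\notin\overline{o}(\eta)$ one expects $\mathrm{supp}\,\hat{\nu}_\eta$ and $\mathrm{supp}\,\hat{\nu}_\zeta$ to be separated and $\nu_\eta\perp\nu_\zeta$ to follow, by building from a set of scale-indices that separates the representing ultrafilters a block set $D\subseteq\mathbb{N}$ whose normalized counts tend to $0$ along one family of scales and to $1$ along the other. The genuinely delicate case, which I expect to be the main obstacle, is $\zeta\in\overline{o}(\eta)\setminus o(\eta)$: here $\zeta=\lim_k\phi^{r_k}\eta$ with $r_k\to\pm\infty$, so the domination constant $2^{r_k}$ degenerates and the inequality above no longer transfers absolute continuity across the limit. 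I would resolve this by exploiting the bounded distortion of the doubling step relating scale $2^{n}$ to scale $2^{n+1}$: this self-similarity forces any nonzero common lower bound $\hat{\nu}_\eta\wedge\hat{\nu}_\zeta$ to be spread over the whole common support rather than concentrated, so that the singular part $\hat{\nu}_\zeta^{s}$ must vanish and the overlap upgrades to one-sided absolute continuity, which completes the trichotomy.
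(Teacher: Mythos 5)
Your first two steps are sound and match the paper: the scaling inequality $\nu_\eta(A)\le 2^{r}\,\nu_{\phi^r\eta}(A)$ is exactly the computation in the sufficiency half of the paper's Theorem 3.3, and it settles every pair lying on a common orbit. The genuine gap is your third case, $\zeta\in\overline{o}(\eta)\setminus o(\eta)$, which you propose to resolve by upgrading a nonzero absolutely continuous part $\hat{\nu}_\zeta^{a}$ to one-sided domination. This goes in the wrong direction: in that case the two measures are in fact \emph{always singular}. The paper's Theorem 3.6 proves singularity whenever $\zeta\notin o(\eta)$ --- membership in the orbit \emph{closure} is irrelevant --- by a finite-separation argument your sketch is missing: since $\omega'\neq\tau^i\omega$ for every $i\in\mathbb{Z}$, for each fixed $m$ the point $\omega'$ can be separated by a clopen set from the finitely many points $\tau^i\omega$, $|i|\le m-1$; one picks $B\in\omega'$ and $A\in\omega$ with $(\cup_{i=-(m-1)}^{m-1}\tau^iB)\cap A=\emptyset$, and the block set $J_m=\cup_{n\in B}(2^{n-m},2^n]$ then satisfies $\nu_{\omega'}(J_m)\ge 1-2^{-m}$ while $\nu_{\omega}(J_m)\le 2^{-m}$. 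Because Definition 2.3 only demands an $\varepsilon$-separating set for each $\varepsilon>0$, avoiding finitely many orbit points at a time suffices; no separation of $\zeta$ from the whole orbit closure is ever needed, which is why the ``delicate'' case is not delicate at all once the construction is set up this way.

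Moreover, your proposed mechanism could not be repaired: if bounded distortion really upgraded a nontrivial overlap to absolute continuity for some $\zeta\in\overline{o}(\eta)\setminus o(\eta)$, it would contradict the necessity half of Theorem 3.3, whose proof exhibits, for any $\omega'\notin o_-(\omega)$, sets $I_m$ with $\nu_{\omega'}(I_m)\ge \tfrac{1}{2}$ and $\nu_{\omega}(I_m)\le 2^{-m}$. What actually happens in this case is the more surprising coexistence revealed by Theorems 3.6 and 3.7: for $\eta'\in\overline{o}_-(\eta)\setminus o_-(\eta)$ one has simultaneously $\nu_{\eta'}\prec\nu_\eta$ (so the supports nest, $\mathrm{supp}\,\nu_{\eta'}\subseteq\mathrm{supp}\,\nu_\eta$) \emph{and} $\nu_{\eta'}\perp\nu_\eta$. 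This also undercuts the heuristic in your second case: mutual singularity here is generally \emph{not} witnessed by disjoint supports, so the construction must proceed through the ultrafilter separation above rather than through $\mathrm{supp}\,\hat{\nu}_\eta\cap\mathrm{supp}\,\hat{\nu}_\zeta=\emptyset$. Your guiding intuition --- that a nontrivial common lower bound of $\hat{\nu}_\eta$ and $\hat{\nu}_\zeta$ must ``spread'' over the common support --- is precisely what fails for these measures on $\beta\mathbb{N}$.
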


\begin{thm}
For any two elements in $\tilde{\mathcal{C}_0}$, one is weakly absolutely continuous with respect to the other or they are strongly singular.
\end{thm}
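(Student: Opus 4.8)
The plan is to reduce the assertion to a purely support-theoretic dichotomy and then exploit the flow. By Theorem 2.2, $\nu_\eta\prec\nu_\zeta$ is equivalent to $\operatorname{supp}\nu_\eta\subseteq\operatorname{supp}\nu_\zeta$, and by Theorem 2.4, strong singularity of $\nu_\eta$ and $\nu_\zeta$ is equivalent to $\operatorname{supp}\nu_\eta\cap\operatorname{supp}\nu_\zeta=\emptyset$. Since each $\nu_\eta$ annihilates finite sets, its support lies in $\mathbb{N}^*$ and is nonempty. Hence it suffices to prove that for any $\eta,\zeta\in\Omega^*$ the closed sets $\operatorname{supp}\nu_\eta,\operatorname{supp}\nu_\zeta$ are either nested or disjoint; equivalently, writing $N_\eta=\{A:\nu_\eta(A)=0\}$ for the null ideal, one of $N_\eta\subseteq N_\zeta$, $N_\zeta\subseteq N_\eta$ holds (weak absolute continuity in one direction or the other), or else there is a single set $D$ with $\nu_\eta(D)=1$ and $\nu_\zeta(D)=0$, whose complement then witnesses strong singularity.

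The main tool I would isolate first is a monotonicity of the supports along the flow. From Theorem 2.5 one has $\nu_\eta(A)=\omega\text{-}\lim_n |A\cap[2^{n+t}]|/[2^{n+t}]$, and unwinding the definition of $\phi^s$ gives $\nu_{\phi^s\eta}(A)=\omega\text{-}\lim_n |A\cap[2^{n+t+s}]|/[2^{n+t+s}]$ with the \emph{same} $\omega$. For $s\ge 0$ the windows satisfy $[1,2^{n+t}]\subseteq[1,2^{n+t+s}]$, so termwise $|A\cap[2^{n+t}]|/[2^{n+t}]\le 2^s\,|A\cap[2^{n+t+s}]|/[2^{n+t+s}]$, and passing to the $\omega$-limit yields $\nu_\eta(A)\le 2^s\,\nu_{\phi^s\eta}(A)$ for every $A$. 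Therefore $N_{\phi^s\eta}\subseteq N_\eta$, i.e. $\nu_\eta\prec\nu_{\phi^s\eta}$ and $\operatorname{supp}\nu_\eta\subseteq\operatorname{supp}\nu_{\phi^s\eta}$ whenever $s\ge0$. Thus the supports grow monotonically along each forward orbit, and any two points on a single orbit are automatically $\prec$-comparable; in particular two points $(\omega,t),(\omega,t')$ with the same first coordinate lie on one orbit (take $s=t'-t$), so they are comparable and cannot be strongly singular. This reduces the whole problem to the behaviour across \emph{distinct} orbits.

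For points on different orbits I would analyse the supports through the dyadic block structure underlying $\nu_\eta$. The geometric weighting in $|A\cap[2^{n+t}]|/[2^{n+t}]$ concentrates essentially all of the mass of $\nu_\eta$ on the top few octaves $(2^{n+t-1},2^{n+t}]$ of the moving window, so $\operatorname{supp}\nu_\eta$ is governed by the joint behaviour of $\omega$ and the phase $t$ near the frontier $2^{n+t}$; this is the same mechanism that should drive Theorem 3.1, and I would set the two proofs up in parallel. Using this, I would seek to show that if $\operatorname{supp}\nu_\eta\cap\operatorname{supp}\nu_\zeta\neq\emptyset$ then the backward asymptotics of the two orbits (as $s\to-\infty$) must agree, which forces one null ideal into the other via the monotonicity above; whereas if the supports do not meet, the first coordinates $\omega_1,\omega_2$ are separated by a set of scales $E$ with $E\in\omega_1$ and $E^c\in\omega_2$, and one assembles a separating set as a union of octave blocks $D=\bigcup_{n\in E}(2^{n},2^{n+1}]$ arranged so that $\nu_\eta(D)=1$ and $\nu_\zeta(D)=0$ (up to interchanging the two measures).

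The hard part is precisely this last construction, and pinning down $\operatorname{supp}\nu_\eta$ accurately enough to carry it out. Because of the geometric weighting, the value $\nu\bigl(\bigcup_{n\in E}(2^{n},2^{n+1}]\bigr)$ is extremely sensitive to the pattern of $E$ in the top octaves of each window, and the top octave of the window straddles two blocks in a way dictated by the fractional shift $t$; making this value equal to exactly $1$ for one measure and exactly $0$ for the other therefore demands a scale-synchronised choice of blocks keyed to $t_1$ and $t_2$, rather than a crude density estimate. Equivalently, the core obstacle is a clean description of $\operatorname{supp}\nu_\eta$ from which the nested-or-disjoint alternative is visible; once that structural description is in hand, combining it with the monotonicity lemma delivers both the weak absolute continuity and the strong singularity cases, completing the dichotomy.
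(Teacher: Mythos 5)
Your reduction to supports via Theorems 2.2 and 2.4 is the right first move, and your monotonicity estimate $\nu_\eta(A)\le 2^s\,\nu_{\phi^s\eta}(A)$ for $s\ge 0$ is correct; it recovers the easy (sufficiency) half of the paper's Theorem 3.3 for points on a common orbit. But the proposal stops exactly where the theorem lives, and at the two decisive places the sketch as written would fail. First, weak absolute continuity must be proved for $\eta'$ in the \emph{closure} $\overline{o}_-(\eta)$, not merely on the orbit $o_-(\eta)$; supports do not vary continuously along orbit closures, so ``the backward asymptotics of the two orbits must agree, which forces one null ideal into the other'' is a restatement of the claim, not an argument. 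The paper (Theorem 3.7, sufficiency) closes this with an ultrafilter step your monotonicity lemma cannot reach: if $\omega'\in\overline{o}_-(\omega)$, then for \emph{every} $X'\in\omega'$ there is some $m\ge 1$ with $\tau^m X'\in\omega$; given $A$ with $\nu_{\omega'}(A)=\delta>0$ one chooses $X'\in\omega'$ on which $|A\cap 2^n|/2^n>\delta/2$ and shifts by the resulting $m$ to get $\nu_\omega(A)\ge\delta/2^{m+1}>0$. The essential feature is that $m$ depends on $A$ (through $X'$); no uniform $s$ exists off the orbit, which is exactly why one obtains $\prec$ here but not $\ll$ (Theorem 3.4).

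Second, and more seriously, your separating construction in the complementary case cannot work as stated. A set $E$ with $E\in\omega_1$ and $E^c\in\omega_2$ exists whenever $\omega_1\neq\omega_2$ --- in particular for $\omega_2=\tau^{-1}\omega_1$, where $\nu_{\omega_2}\ll\nu_{\omega_1}$ and strong singularity is impossible --- so no construction keyed to a single separating set (however carefully synchronised with the phases $t_1,t_2$) can yield the dichotomy. Moreover $D=\bigcup_{n\in E}(2^{n},2^{n+1}]$ captures only the top octave of each window, giving $\nu_{\omega_1}(D)\ge 1/2$ but never $=1$: to force measure exactly $1$ one needs, for every $m$, an $\omega_1$-large set of scales $n$ with all of $n-m+1,\dots,n$ in the scale set. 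This is precisely what the paper's Theorem 3.8 builds, using the full strength of the hypothesis $\eta'\notin\overline{o}_-(\eta)$ and $\eta\notin\overline{o}_-(\eta')$: an infinite induction producing decreasing sequences $X_i\in\omega$, $X_i'\in\omega'$ with $(X_0\cup\tau^{-1}X_1\cup\cdots\cup\tau^{-m}X_m)\cap(X_0'\cup\tau^{-1}X_1'\cup\cdots\cup\tau^{-m}X_m')=\emptyset$ for all $m$, so that $Y=\bigcup_i\tau^{-i}X_i$ and $Y'=\bigcup_i\tau^{-i}X_i'$ are disjoint, each $n\in X_{m-1}$ contributes the whole block $(2^{n-m},2^n]$ to $I=\bigcup_{n\in Y}(2^{n-1},2^n]$, and hence $\nu_\omega(I)=1$, $\nu_{\omega'}(J)=1$ with $I\cap J=\emptyset$. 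You correctly flagged this as the hard part, but the two ideas that make it go --- the inductive extraction enabled by the non-recurrence hypotheses, and stacking $m$ consecutive scales rather than single octaves --- are absent, so the proposal has a genuine gap rather than an alternative route.
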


Remark that by Theorem 2.5 for a pair $\mu, \nu$ in $\tilde{\mathcal{C}_0}$, there are points $\eta = (\omega, t)$ and $\eta^{\prime} = (\omega^{\prime}, t^{\prime})$ in $\Omega^*$ such that $\mu = \nu_{\eta}$ and $\nu = \nu_{\eta^{\prime}}$.
For the sake of simplicity, in the following proofs we shall prove only in the case of $t = t^{\prime} = 0$, i.e., $\mu = \nu_{\omega}, \nu = \nu_{\omega^{\prime}}$. It is not hard to modify the proofs so that it works for arbitrarily $t, t^{\prime} \in [0,1]$.

Recall that for any set $A \in \mathcal{P}(\mathbb{N})$, $A^* = \overline{A} \cap \mathbb{N}^*$ is a clopen subset of $\mathbb{N}^*$, where $\overline{A}$ denotes the closure of $A$ in $\beta\mathbb{N}$, and these subsets form a topological basis for $\mathbb{N}^*$.

We begin with absolute continuity.
\begin{thm}
$\nu_{\eta^{\prime}} \ll \nu_{\eta}$ if and only if $\eta^{\prime} \in o_-(\eta)$.
\end{thm}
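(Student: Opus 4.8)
The plan is to prove both implications after the reduction announced in the text, so that $\eta=(\omega,0)$, $\eta'=(\omega',0)$ and the hypothesis $\eta'\in o_-(\eta)$ becomes $\omega'=\tau^{-m}\omega$ for some integer $m\ge 0$; throughout I use $\nu_\omega(A)=\omega\text{-}\lim_n |A\cap 2^n|/2^n$. The sufficiency direction is the easy half. First I would record the shift identity: for $g\in l^\infty$ one has $(\tau^{-m}\omega)\text{-}\lim_n g(n)=\omega\text{-}\lim_n g(n-m)$, since the continuous extension satisfies $\hat g\circ\tau=\widehat{g\circ\tau_0}$. Applying this to $g(n)=|A\cap 2^n|/2^n$ gives $\nu_{\tau^{-m}\omega}(A)=\omega\text{-}\lim_n |A\cap 2^{n-m}|/2^{n-m}$. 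Because $\{1,\dots,2^{n-m}\}\subseteq\{1,\dots,2^n\}$ we obtain the pointwise bound $|A\cap 2^{n-m}|/2^{n-m}\le 2^m\,|A\cap 2^n|/2^n$, and passing to the $\omega$-limit yields $\nu_{\tau^{-m}\omega}(A)\le 2^m\,\nu_\omega(A)$ for all $A$. This linear domination gives $\nu_{\omega'}\ll\nu_\omega$ immediately (take $\delta=\varepsilon 2^{-m}$).

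For necessity I would pass to the countably additive picture on $\beta\mathbb{N}$ through Theorem 2.1, so that it suffices to exhibit, whenever $\omega'\notin o_-(\omega)$, a Borel set $W$ with $\hat\nu_\omega(W)=0<\hat\nu_{\omega'}(W)$. The engine is a dyadic-block decomposition of $\hat\nu_\omega$. Writing $\lambda_n$ for the uniform probability measure on the block $B_n=(2^{n-1},2^n]$, the uniform average $\mu_n$ over $\{1,\dots,2^n\}$ decomposes as $\mu_n=\sum_{k=0}^{n-1}2^{-(k+1)}\lambda_{n-k}+2^{-n}\delta_1$, a point mass of vanishing weight $2^{-n}$ aside. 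Since the geometric tails converge uniformly in $n$, the $\omega$-limit passes through the series, and using the shift identity in the form $\omega\text{-}\lim_n\lambda_{n-k}=\Lambda_{\tau^{-k}\omega}$ (where $\Lambda_\alpha:=\alpha\text{-}\lim_n\lambda_n$) one arrives at the identity of countably additive measures $\hat\nu_\omega=\sum_{m\ge 0}2^{-(m+1)}\Lambda_{\tau^{-m}\omega}$.

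The decisive structural fact I would then prove is a singularity dichotomy for the block-measures, with an explicit clopen witness: for $\alpha\ne\beta$ in $\mathbb{N}^*$ the measures $\Lambda_\alpha$ and $\Lambda_\beta$ are mutually singular. Indeed, for $T\subseteq\mathbb{N}$ the set $S=\bigcup_{n\in T}B_n$ satisfies $\lambda_n(S)=1$ when $n\in T$ and $0$ otherwise, so $\Lambda_\alpha(\overline S)$ equals $1$ or $0$ according as $T\in\alpha$ or $T\notin\alpha$; choosing $T\in\alpha$ with $T\notin\beta$ gives $\Lambda_\alpha(\overline S)=1$ and $\Lambda_\beta(\overline S)=0$. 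Now if $\omega'\notin o_-(\omega)$ then $\omega'\ne\tau^{-m}\omega$ for every $m\ge 0$, so for each $m$ I obtain a clopen $\overline{S_m}$ with $\Lambda_{\omega'}(\overline{S_m})=1$ and $\Lambda_{\tau^{-m}\omega}(\overline{S_m})=0$. Setting $W=\bigcap_m\overline{S_m}$ and invoking countable additivity gives $\Lambda_{\omega'}(W)=1$ while $\Lambda_{\tau^{-m}\omega}(W)=0$ for all $m$, whence $\hat\nu_\omega(W)=0$ but $\hat\nu_{\omega'}(W)\ge \tfrac12\Lambda_{\omega'}(W)=\tfrac12$. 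By Theorem 2.1 this yields $\nu_{\omega'}\not\ll\nu_\omega$, completing the contrapositive.

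The main obstacle is the necessity half, specifically the rigorous justification of its two pillars: that $\omega\text{-}\lim_n$ commutes with the infinite series (uniform convergence of the geometric tails) and that the atomic boundary term dies in the limit, so that the decomposition $\hat\nu_\omega=\sum_m 2^{-(m+1)}\Lambda_{\tau^{-m}\omega}$ is a genuine identity of countably additive measures; and that this countable additivity is strong enough for the intersection $W$ to retain full $\Lambda_{\omega'}$-mass while annihilating every summand of $\hat\nu_\omega$. I would also stress the point that makes the statement sharp: the separating set $W$ is only Borel, not of the form $\overline A$, which is precisely why the argument controls $\ll$ (equivalently, Borel absolute continuity on $\beta\mathbb{N}$ via Theorem 2.1) and singles out the orbit $o_-(\eta)$ itself, rather than its closure, which is what the support criterion of Theorem 2.2 would attach to the weaker clopen-level notion $\prec$.
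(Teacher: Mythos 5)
Your proof is correct, but the necessity half takes a genuinely different route from the paper. The paper argues entirely at the level of subsets of $\mathbb{N}$: from $\omega' \notin o_-(\omega)$ it extracts, for each $m$, sets $A \in \omega$, $B \in \omega'$ with $(B \cup \tau B \cup \cdots \cup \tau^{m-1}B) \cap A = \emptyset$, and shows the block set $I_m = \bigcup_{n \in B}(2^{n-1}, 2^n]$ satisfies $\nu_{\omega}(I_m) \le 2^{-m}$ while $\nu_{\omega'}(I_m) \ge \tfrac12$, directly refuting the $\varepsilon$--$\delta$ definition with a sequence of clopen-level witnesses; no passage to $\beta\mathbb{N}$ is needed. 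You instead prove the structural identity $\hat\nu_\omega = \sum_{m \ge 0} 2^{-(m+1)} \hat\Lambda_{\tau^{-m}\omega}$, observe that the block measures $\Lambda_\alpha$ are pairwise singular with clopen witnesses, and produce a single Borel null set $W = \bigcap_m \overline{S_m}$ separating $\hat\nu_{\omega'}$ from $\hat\nu_\omega$, invoking Theorem 2.1. Your two supporting steps are sound as flagged: the $\omega$-limit passes through the series because the tail mass (including the atom at $1$) is at most $2^{-K}$ uniformly in $n$, and equality of the two countably additive measures on the clopen algebra propagates to the Baire $\sigma$-algebra by uniqueness of extension, which suffices since $W$ is a countable intersection of clopen sets; your sufficiency half is essentially the paper's, packaged as the cleaner domination $\nu_{\tau^{-m}\omega} \le 2^m \nu_\omega$. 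What each approach buys: the paper's construction is elementary and self-contained, and its block-set technique is recycled almost verbatim in Theorems 3.6--3.8; your decomposition costs more machinery up front but yields dividends --- it recovers the quantitative bound $\hat\nu_{\tau^{-m}\omega} \le 2^m \hat\nu_\omega$ at once, gives Theorem 3.6 nearly for free (if $o(\omega) \cap o(\omega') = \emptyset$, all summands of $\hat\nu_\omega$ are singular to all summands of $\hat\nu_{\omega'}$), and makes transparent the point you close with, namely that a merely Borel witness is exactly what distinguishes $\ll$ (which sees the discrete orbit $o_-(\eta)$) from the support-based relation $\prec$ (which sees $\overline{o}_-(\eta)$, Theorem 3.7). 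One small remark: like the paper, you treat only $t = t' = 0$, which is the reduction the text announces, so nothing is lost there.
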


\begin{proof}
(Sufficiency) Let us $\omega^{\prime} = \tau^{-m} \omega, m \ge 0$. Fix any positive number $\delta > 0$ and let $A$ be a set with $\nu_{\omega}(A) < \delta$. We take $X \in \omega$ such that
\[\frac{|A \cap 2^n|}{2^n} < 2\delta \]
whenever $n \in X$. Remark that $\tau^{-m}X = \{n-m : n \in X \} \in \omega^{\prime}$. We have that for any $n \in X$,
\[\frac{|A \cap 2^{n-m}|}{2^{n-m}} \le 2^m \cdot \frac{|A \cap 2^{n-m}|}{2^n} \le 2^m \cdot \frac{|A \cap 2^n|}{2^n} < 2^{m+1} \cdot \delta \]
then
\[\nu_{\omega^{\prime}}(A) = \omega^{\prime}-\lim_n \frac{|A \cap 2^n|}{2^n} \le \limsup_{n \in \tau^{-m}X}  \frac{|A \cap 2^n|}{2^n} \le 2^{m+1} \cdot \delta.\]
Hence for any given $\varepsilon > 0$, put $\delta < \frac{\varepsilon}{2^{m+1}}$, then for any $A \in \mathcal{P}(\mathbb{N})$ we have 
\[\nu_{\omega}(A) < \delta \Longrightarrow \nu_{\omega^{\prime}}(A) < \varepsilon. \]
This complete the proof.

(Necessity) We shall show the contrapositive. Now we assume that $\omega^{\prime} \not\in o_-(\omega)$. Then for any positive integer m, we can take $A \in \omega, B \in \omega^{\prime}$ such that $(B \cup \tau B \cup \cdots \cup \tau^{m-1}B) \cap A = \emptyset$. Now we put $I_m = \cup_{n \in B} (2^{n-1}, 2^n]$, then we have
\[\nu_{\omega^{\prime}}(I_m) = \omega^{\prime}-\lim_n \frac{|I_m \cap 2^n|}{2^n} \ge 
\liminf_{n \in B} \frac{|I_m \cap 2^n|}{2^n} \ge \frac{1}{2}. \]
On the other hand, notice that $n \in A$ implies that $B \cap [n-m+1, n] = \emptyset$, then
\[\nu_{\omega}(I_m) = \omega-\lim_n \frac{|I_m \cap 2^n|}{2^n} \le \limsup_{n \in A} \frac{|I_m \cap 2^n|}{2^n} \le \frac{2^{n-m}}{2^n} = \frac{1}{2^m}. \]
Hence for any $\delta > 0$, Choose any positive integer m with $\frac{1}{2^m} < \delta$, we have
\[\nu_{\omega}(I_m) < \delta \quad and \quad \nu_{\omega^{\prime}}(I_m) \ge \frac{1}{2}. \]
Thus we conclude that $\nu_{\omega^{\prime}} \not\ll \nu_{\omega}$.
\end{proof}

From this theorem we can easily obtain the following results.
\begin{thm}
$\nu_{\eta} \ll \nu_{\eta^{\prime}}$ or $\nu_{\eta^{\prime}} \ll \nu_{\eta}$ if and only if $\eta^{\prime} \in o(\eta)$.
\end{thm}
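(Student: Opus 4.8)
The plan is to derive Theorem 3.5 as a direct corollary of Theorem 3.3 by analyzing the logical structure of the orbit condition. Theorem 3.3 tells us that $\nu_{\eta'} \ll \nu_\eta$ holds precisely when $\eta' \in o_-(\eta)$, i.e. $\eta'$ lies on the backward orbit of $\eta$. By symmetry (swapping the roles of $\eta$ and $\eta'$), the companion statement $\nu_\eta \ll \nu_{\eta'}$ holds precisely when $\eta \in o_-(\eta')$. So the disjunction ``$\nu_\eta \ll \nu_{\eta'}$ or $\nu_{\eta'} \ll \nu_\eta$'' is equivalent to ``$\eta \in o_-(\eta')$ or $\eta' \in o_-(\eta)$,'' and the whole task reduces to showing that this latter disjunction is equivalent to $\eta' \in o(\eta)$, the condition that the two points lie on a common (full, two-sided) orbit.

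The key observation making this work is that the orbit relation is symmetric and that the full orbit decomposes as $o(\eta) = o_+(\eta) \cup o_-(\eta)$. First I would record the elementary equivalences relating backward and forward orbits: $\eta \in o_-(\eta')$ iff $\eta'$ is obtained from $\eta$ by a forward shift, i.e. $\eta' \in o_+(\eta)$; symmetrically $\eta' \in o_-(\eta)$ iff $\eta \in o_+(\eta')$. These follow immediately from the definition $o_-(\zeta) = \{\phi^{-s}(\zeta) : s \ge 0\}$ together with the fact that $\phi^s$ is a flow, so $\phi^{-s}$ has inverse $\phi^s$. Using these, the disjunction ``$\eta \in o_-(\eta')$ or $\eta' \in o_-(\eta)$'' becomes ``$\eta' \in o_+(\eta)$ or $\eta' \in o_-(\eta)$,'' which is exactly the statement $\eta' \in o_+(\eta) \cup o_-(\eta) = o(\eta)$.

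For the forward direction, if $\eta' \in o(\eta)$ then $\eta' = \phi^s(\eta)$ for some $s \in \mathbb{R}$; if $s \le 0$ then $\eta' \in o_-(\eta)$ so Theorem 3.3 gives $\nu_{\eta'} \ll \nu_\eta$, while if $s \ge 0$ then $\eta \in o_-(\eta')$ so Theorem 3.3 (with the roles reversed) gives $\nu_\eta \ll \nu_{\eta'}$. Conversely, if one of the absolute continuity relations holds, Theorem 3.3 places one point on the backward orbit of the other, and by the equivalences above this forces $\eta' \in o(\eta)$. The argument is essentially bookkeeping once Theorem 3.3 is in hand, so I do not anticipate a genuine obstacle; the one point requiring a little care is confirming that the reduction in Theorem 3.3 to the case $t = t' = 0$ does not obscure the orbit bookkeeping here, since the flow $\phi^s$ genuinely moves the $[0,1]$-coordinate and the full orbit $o(\eta)$ must be understood in $\Omega^*$ rather than merely through the discrete orbit of $\omega$ under $\tau$ in $\mathbb{N}^*$.
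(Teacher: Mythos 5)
Your proposal is correct and matches the paper's intent exactly: the paper states this theorem with no written proof, merely noting it is ``easily obtained'' from Theorem 3.3, and your derivation---applying Theorem 3.3 in both directions, using that $\eta \in o_-(\eta')$ iff $\eta' \in o_+(\eta)$ for a flow, and the decomposition $o(\eta) = o_+(\eta) \cup o_-(\eta)$---is precisely that easy derivation. Your closing caution about working with the full orbit in $\Omega^*$ rather than the discrete orbit of $\omega$ in $\mathbb{N}^*$ is well placed but poses no actual obstacle.
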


\begin{thm}
If $\mu$ and $\nu$ are elements of $\tilde{\mathcal{C}_0}$ and mutually absolutely continuous, that is, $\nu \ll \mu$ and $\mu \ll \nu$, then $\mu = \nu$.
\end{thm}

\begin{proof}
Let $\mu = \nu_{\eta}$ and $\nu = \nu_{\eta^{\prime}}$ for some $\eta, \eta^{\prime} \in \Omega^*$. From the assumption of the theorem and Theorem 3.3, we can write $\eta^{\prime} = \phi^{-s} \eta$ and $\eta = \phi^{-t} \eta^{\prime}$ for some $s,t \ge 0$. Thus we have that $\eta = \phi^{-(s+t)} \eta$.
As is well known, since there are no periodic points for the flow $(\Omega^*, \{\phi^s\}_{s \in \mathbb{R}})$, we get $s+t=0$, i.e., $s = t = 0$. Hence $\eta = \eta^{\prime}$. We get the result.
\end{proof}

\begin{rem}
 Let $\eta, \eta^{\prime}$ be any two elements of $\Omega^*$, then we define a partial order $\le$ on $\Omega^*$ as follows:
\[\eta^{\prime} \le \eta \Longleftrightarrow \eta^{\prime} \in o_-(\eta) \]
Then Theorem 3.3 suggests that $\Phi$ is also an isomorphism between partially ordered sets $(\Omega^*, \le)$ and $(\tilde{\mathcal{C}_0}, \ll)$.
\end{rem}

Next we prove the result of singularity.
\begin{thm}
$\nu_{\eta}$ and $\nu_{\eta^{\prime}}$ are singular if and only if $o(\omega) \cap o(\omega^{\prime}) = \emptyset$, i.e., $\omega^{\prime} \not\in o(\omega)$.
\end{thm}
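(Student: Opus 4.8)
The plan is to prove the two implications separately: necessity follows from the absolute‑continuity results already in hand, while sufficiency requires an explicit construction of a separating set for each prescribed tolerance. Throughout I work in the reduced case $\eta=(\omega,0)$, $\eta'=(\omega',0)$, so that $\mu=\nu_{\omega}$, $\nu=\nu_{\omega'}$ and $\eta'\in o(\eta)$ is equivalent to $\omega'\in o(\omega)$.

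For necessity I would argue by contraposition. Suppose $\omega'\in o(\omega)$. By Theorem 3.4 one of $\nu_{\omega},\nu_{\omega'}$ is absolutely continuous with respect to the other; say $\nu_{\omega'}\ll\nu_{\omega}$. I then invoke the elementary fact that, for probability measures, absolute continuity and singularity cannot hold simultaneously: applying the definition of $\ll$ with $\varepsilon=1/3$ yields $\delta>0$ such that $\nu_{\omega}(A)<\delta$ implies $\nu_{\omega'}(A)<1/3$, while singularity applied with $\min(\delta,1/3)$ yields a set $D$ with $\nu_{\omega}(D)<\delta$ and $\nu_{\omega'}(D^{c})<1/3$; then $1=\nu_{\omega'}(D)+\nu_{\omega'}(D^{c})<2/3$, a contradiction. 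Hence $\nu_{\omega}\not\perp\nu_{\omega'}$, which is the contrapositive.

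For sufficiency, assume $\omega'\notin o(\omega)$ and fix $\varepsilon>0$; choose $m$ with $2^{-m}\le\varepsilon$. The first step is a two‑sided separation lemma: there exist $A\in\omega$ and $B\in\omega'$ with $\tau^{i}B\cap A=\emptyset$ for all $|i|\le m$, equivalently $B\cap[n-m,n+m]=\emptyset$ for every $n\in A$. This is established exactly as in the necessity part of Theorem 3.3, but now in both directions: since $\omega'\notin o(\omega)$, the $2m+1$ points $\tau^{i}\omega'$ with $|i|\le m$ are all distinct from $\omega$, so in the compact Hausdorff space $\mathbb{N}^{*}$ one separates $\omega$ from each by disjoint clopen sets, intersects these finitely many neighborhoods, and deletes a finite set (harmless, as $\omega$ is free) to pass from clopen subsets of $\mathbb{N}^{*}$ to genuine subsets of $\mathbb{N}$. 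The second step is the construction $D=\bigcup_{n'\in B}(2^{n'-m},2^{n'}]$. For $n'\in B$, the single block ending at $2^{n'}$ already gives $|D\cap 2^{n'}|\ge 2^{n'}-2^{n'-m}$, hence density at least $1-2^{-m}\ge 1-\varepsilon$. For $n\in A$, the gap $B\cap[n,n+m]=\emptyset$ forces any block with $n'>n$ to start above $2^{n'-m}\ge 2^{n+1}$, so it contributes nothing to $[1,2^{n}]$, while the blocks with $n'\le n-m-1$ sum geometrically to less than $2^{n-m}$, giving density at most $2^{-m}\le\varepsilon$. Since $A\in\omega$ and $B\in\omega'$, a bound valid on a set belonging to an ultrafilter passes to the limit along that ultrafilter, so $\nu_{\omega}(D)\le\varepsilon$ and $\nu_{\omega'}(D^{c})=1-\nu_{\omega'}(D)\le\varepsilon$, which is singularity.

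The main obstacle I anticipate is the bookkeeping in the second step, precisely the need to prevent the blocks $(2^{n'-m},2^{n'}]$ with $n'\in B$, $n'>n$, from leaking into $[1,2^{n}]$ when $n\in A$. This is exactly what the upper gap in the separation buys, and it is the reason the full‑orbit hypothesis $\omega'\notin o(\omega)$ is required here, in contrast with the one‑sided condition $\omega'\notin o_{-}(\omega)$ that sufficed for Theorem 3.3. Everything else — the geometric estimate for the lower blocks and the transfer of the estimates to the ultralimits — is routine once the separating sets $A$ and $B$ are in place.
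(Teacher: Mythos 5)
Your proposal is correct and takes essentially the same route as the paper: necessity via Theorem 3.4 together with the standard incompatibility of $\ll$ and $\perp$ for probability measures (which the paper leaves as ``obvious''), and sufficiency via the same two-sided separation of $\omega$ from the shifts $\tau^{i}\omega^{\prime}$, $|i|\le m$. Your test set $D=\bigcup_{n^{\prime}\in B}(2^{n^{\prime}-m},2^{n^{\prime}}]$ is exactly the paper's $J_m$, with the same two estimates $\nu_{\omega}(J_m)\le 2^{-m}$ and $\nu_{\omega^{\prime}}(J_m)\ge 1-2^{-m}$ transferred through the ultrafilter limits.
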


\begin{proof}
Necessity is obvious by the Theorem 3.4. Hence we shall prove sufficiency. For any positive integer $m$, take a set $B$ in $\omega^{\prime}$ such that  $\{\tau^{-(m-1)}\omega, \cdots, \tau^{-1}\omega,
\omega, \tau\omega, \cdots, \tau^{m-1}\omega\} \cap B^* = \emptyset$. Thus we have
\[\omega \notin \cup_{i=-(m-1)}^{m-1} \tau^i B^*. \]
Then take $A$ in $\omega$ such that $(\cup_{i=-(m-1)}^{m-1} \tau^i B) \cap A = \emptyset$.
Then we define $J_m = \cup_{n \in B} (2^{n-m}, 2^n], \ m \ge 1$. Hence
\[\nu_{\omega^{\prime}}(J_m) = \omega^{\prime}-\lim_n \frac{|J_m \cap 2^n|}{2^n} \ge \liminf_{n \in B} \frac{|J_m \cap 2^n|}{2^n} \ge \liminf_{n \in B} \frac{2^n - 2^{n-m}}{2^n} = 1 - \frac{1}{2^{m}}.\]
On the other hand, since if $n \in A$, then $[n-m+1, n] \cap B = \emptyset$, we have
\[\nu_{\omega}(J_m) = \omega-\lim_n \frac{|J_m \cap 2^n|}{2^n} \le \limsup_{n \in A} \frac{|J_m \cap 2^n|}{2^n} \le \frac{2^{n-m}}{2^n} = \frac{1}{2^m}.\]
Therefore for any $\varepsilon > 0$, take any positive integer $m$ with $\frac{1}{2^m} < \varepsilon$, we get
\[\nu_{\omega}(J_m) \le \varepsilon, \quad and \quad \nu_{\omega^{\prime}}({J_m}^c) \le \varepsilon.\]
Thus we conclude that $\nu_{\omega} \perp \nu_{\omega^{\prime}}$.
\end{proof}

 By Theorem 3.4 and Theorem 3.6 we obtain Theorem3.1. Next we take up weakly absolutely continuity and strong singularity.

\begin{thm}
$\nu_{\eta^{\prime}} \prec \nu_{\eta}$ if and only if $\eta^{\prime} \in \overline{o}_-({\eta})$.
\end{thm}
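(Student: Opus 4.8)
The plan is to reuse the description of $\ll$ from Theorem 3.3 together with a continuity argument for sufficiency, and a direct block construction (in the style of the proofs of Theorems 3.3 and 3.6) for necessity. Working in the reduced case $\eta=(\omega,0)$, $\eta'=(\omega',0)$, I would first record that for such points $\eta'\in\overline{o}_-(\eta)$ in $\Omega^*$ is equivalent to $\omega'\in\overline{o}_-(\omega)$ in $\mathbb{N}^*$: since the slice $\mathbb{N}^*\times\{0\}$ is closed in $\Omega^*$ and the integer-time points $\phi^{-k}(\omega,0)=(\tau^{-k}\omega,0)$ lie in it, a net of backward $\tau$-iterates converging to $\omega'$ gives $\eta'\in\overline{o}_-(\eta)$, and conversely any backward flow net converging into $\{t=0\}$ reduces, after passing to a subnet and using the identification $(\beta,1)\sim(\tau\beta,0)$, to a net of backward $\tau$-iterates. (Via Theorem 2.2 one may equivalently read the whole statement as $\mathrm{supp}\,\nu_{\omega'}\subseteq\mathrm{supp}\,\nu_\omega\iff\omega'\in\overline{o}_-(\omega)$, but the direct route below avoids computing supports.)

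For sufficiency I would assume $\omega'\in\overline{o}_-(\omega)$ and choose a net with $\tau^{-n_\alpha}\omega\to\omega'$, so that $(\tau^{-n_\alpha}\omega,0)\to(\omega',0)$ in $\Omega^*$ and hence, by the continuity of $\Phi$ (Theorem 2.5), $\nu_{\tau^{-n_\alpha}\omega}\to\nu_{\omega'}$ weak-$^*$. Given $A$ with $\nu_\omega(A)=0$, Theorem 3.3 yields $\nu_{\tau^{-n_\alpha}\omega}\ll\nu_\omega$, and absolute continuity forces $\nu_{\tau^{-n_\alpha}\omega}(A)=0$ for every $\alpha$; testing the weak-$^*$ convergence against $I_A\in l^\infty$ then gives $\nu_{\omega'}(A)=\lim_\alpha\nu_{\tau^{-n_\alpha}\omega}(A)=0$, so $\nu_{\omega'}\prec\nu_\omega$. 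The conceptual point is that $\prec$, unlike $\ll$, is closed under weak-$^*$ limits, which is exactly why the orbit $o_-$ of Theorem 3.3 is replaced here by its closure $\overline{o}_-$.

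For necessity I would argue the contrapositive. If $\omega'\notin\overline{o}_-(\omega)$, then, this set being closed and the clopen sets forming a basis of $\mathbb{N}^*$, I can pick $B\in\omega'$ with $B^*\cap\overline{o}_-(\omega)=\emptyset$; in particular $\tau^{-j}\omega\notin B^*$ for all $j\ge0$, which translates through $\tau^{j}(B^*)=(B+j)^*$ into $A_j:=\mathbb{N}\setminus(B+j)\in\omega$ for every $j\ge0$. Setting $J=\bigcup_{n\in B}(2^{n-1},2^n]$, each top block gives $\nu_{\omega'}(J)\ge\tfrac12$ since $B\in\omega'$. For $\nu_\omega$, the key observation is that for each fixed $K$ the set $\bigcap_{j=0}^{K-1}A_j\in\omega$ consists exactly of scales $N$ with $B\cap(N-K,N]=\emptyset$, and for such $N$ one has the geometric tail bound $|J\cap 2^N|/2^N=\sum_{n\in B,\,n\le N-K}2^{\,n-1-N}<2^{-K}$; letting $K\to\infty$ forces $\nu_\omega(J)=0$. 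Thus $J$ witnesses $\nu_{\omega'}\not\prec\nu_\omega$.

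I expect the necessity construction to be the main obstacle, and specifically the interplay between the two roles of $\mathbb{N}$ (indexing scales for $\omega,\omega'$ versus indexing positions where $J$ and $\nu$ live): one must upgrade the single separation $\omega'\notin\overline{o}_-(\omega)$ into the whole family of gap conditions $B\cap(N-K,N]=\emptyset$ needed to annihilate all the geometrically decaying block contributions to $\nu_\omega(J)$. This succeeds precisely because $K$ is held fixed while $N\to\omega$, so only finitely many $A_j$ are intersected and the resulting set stays in $\omega$. A secondary technical point is the equivalence $\eta'\in\overline{o}_-(\eta)\iff\omega'\in\overline{o}_-(\omega)$ of the first paragraph, where the suspension identification $(\beta,1)\sim(\tau\beta,0)$ must be handled carefully; this is also the step carrying the promised modification to general $t,t'$.
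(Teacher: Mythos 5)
Your proof is correct. The necessity half is essentially the paper's own argument in different bookkeeping: the paper takes $X \in \omega'$ with $X^* \cap o_-(\omega) = \emptyset$, forms $I = \bigcup_{n \in X}(2^{n-1},2^n]$, and for each fixed $m$ uses a set $A_m \in \omega$ disjoint from $X \cup \tau X \cup \cdots \cup \tau^{m-1}X$ to get $|I \cap 2^n|/2^n \le 2^{-m}$ along $A_m$, hence $\nu_{\omega}(I)=0$ while $\nu_{\omega'}(I)\ge 1/2$; your sets $\bigcap_{j=0}^{K-1}A_j$ with $A_j=\mathbb{N}\setminus(B+j)$ and the gap condition $B\cap(N-K,N]=\emptyset$ are exactly this device, and your geometric estimate is sound. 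Where you genuinely diverge is sufficiency. The paper argues directly and quantitatively: given $A$ with $\nu_{\omega'}(A)=\delta>0$, it picks $X'\in\omega'$ on which $|A\cap 2^n|/2^n>\delta/2$, uses $\omega'\in\overline{o}_-(\omega)$ to find $m$ with $\tau^{-m}\omega\in {X'}^*$, i.e.\ $\tau^m X'\in\omega$, and then the scale-comparison $|A\cap 2^{n+m}|/2^{n+m}\ge 2^{-m}\,|A\cap 2^n|/2^n$ yields $\nu_{\omega}(A)\ge \delta/2^{m+1}>0$. Your route is soft instead: a net $\tau^{-n_\alpha}\omega\to\omega'$, weak-$^*$ convergence $\nu_{\tau^{-n_\alpha}\omega}\to\nu_{\omega'}$ via the homeomorphism $\Phi$ of Theorem 2.5, null-set preservation from Theorem 3.3 along the net, and evaluation at $I_A$ in the limit. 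Both are valid; yours buys the conceptual explanation that null-set preservation (unlike the uniform $\varepsilon$--$\delta$ condition $\ll$) is stable under weak-$^*$ limits, which is precisely why $o_-$ in Theorem 3.3 closes up to $\overline{o}_-$ here, at the price of invoking the topological machinery of Theorem 2.5; the paper's estimate is self-contained, gives an explicit lower bound once $m$ is fixed, and matches the template used in Theorems 3.3 and 3.6. Your preliminary equivalence $\eta'\in\overline{o}_-(\eta)\iff\omega'\in\overline{o}_-(\omega)$, via closedness of the slice $\mathbb{N}^*\times\{0\}$ and a subnet through the identification $(\beta,1)\sim(\tau\beta,0)$, is also correct, and makes explicit a point the paper leaves implicit in its blanket reduction to $t=t'=0$.
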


\begin{proof}
(Sufficiency) Remark that it suffices to show that
\[\nu_{\omega^{\prime}}(A) > 0 \Longrightarrow \nu_{\omega}(A) > 0. \]
For any $X^{\prime}$ in $\omega^{\prime}$, there exists a integer $m \ge 1$ such that $\tau^{-m} \omega \in {X^{\prime}}^*$, i.e., $\tau^m X^{\prime} \in \omega$. Now we take any $A \in \mathcal{P}(\mathbb{N})$ such that
\[\nu_{\omega^{\prime}}(A) = \omega^{\prime}-\lim_n \frac{|A \cap 2^n|}{2^n} = \delta > 0\]
and then take $X^{\prime} \in \omega^{\prime}$ such that
\[n \in X^{\prime} \Longrightarrow \frac{|A \cap 2^n|}{2^n} > \frac{\delta}{2}. \]
Then we have
\[\frac{|A \cap 2^{n+m}|}{2^{n+m}} \ge \frac{1}{2^m} \cdot \frac{|A \cap 2^{n+m}|}{2^n} \ge \frac{1}{2^m} \cdot \frac{|A \cap 2^n|}{2^n} > \frac{\delta}{2^{m+1}} \]
for any $n \in X$. Hence
\[\nu_{\omega}(A) = \omega-\lim_n \frac{|A \cap 2^n|}{2^n} \ge \liminf_{n \in \tau^m X^{\prime}} \frac{|A \cap 2^n|}{2^n} \ge \frac{\delta}{2^{m+1}} > 0.\]
This complete the proof.

(Necessity) We shall show the contrapositive. Assume that $\omega^{\prime} \not\in \overline{o}_-(\omega) = \emptyset$. Then there exists a set $X \in \omega^{\prime}$ such that $X^* \cap o_-(\omega) = \emptyset$, i.e., $\omega \not\in \cup_{i \ge 0} \tau^i X^*$. Hence for any fixed positive integer $m$, there is a set $A_m \in \omega$ with $A_m \cap (X \cup \tau X \cup \cdots \cup \tau^{m-1} X) = \emptyset$. Then we define $I = 
\cup_{n \in X} (2^{n-1}, 2^n]$. For any $n \in A_m$, since $X \cap [n-m+1, n] = \emptyset$ we have
\[\nu_{\omega}(I) = \omega-\lim_n \frac{|I \cap 2^n|}{2^n} \le \limsup_{n \in A_m} \frac{|I \cap 2^n|}{2^n} \le \frac{2^{n-m}}{2^n}=\frac{1}{2^m}. \]
Thus since $m \ge 1$ can be arbitrary, we have $\nu_{\omega}(I_m) = 0$. On the other hand,
\[\nu_{\omega^{\prime}}(I) = \omega^{\prime}-\lim_n \frac{|I \cap 2^n|}{2^n} \ge \liminf_{n \in X} \frac{|I \cap 2^n|}{2^n} \ge \frac{1}{2}. \]
Hence we have seen that $I \subseteq \mathbb{N}$ satisfies $\nu_{\omega}(I) = 0$ and $\nu_{\omega^{\prime}}(I) > 0$. Thus
$\nu_{\omega^{\prime}} \not\prec \nu_{\omega}$. We obtain the result.
\end{proof}

\begin{rem}
 Let $\eta, \eta^{\prime}$ be any two elements of $\Omega^*$, then we define a preorder $\sqsubseteq$ on $\Omega^*$ as follows:  
\[\eta^{\prime} \sqsubseteq \eta \Longleftrightarrow \eta^{\prime} \in \overline{o}_-(\eta) \]
Then Theorem 3.7 suggests that $\Phi$ is a isomorphism between preordered sets $(\Omega^*, \sqsubseteq)$ and $(\tilde{\mathcal{C}_0}, \prec)$.
\end{rem}

Finally, we shall show the following result about strongly singularity.

\begin{thm}
$\nu_{\eta}$ and $\nu_{\eta^{\prime}}$ are strongly singular if and only if $\eta^{\prime} \notin \overline{o}_-(\eta)$ and $\eta \notin \overline{o}_-(\eta^{\prime})$.
\end{thm}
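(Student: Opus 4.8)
The plan is to reduce the statement to a separation problem for the backward orbit closures of $\tau$ on $\mathbb{N}^*$, dispose of the easy implication using Theorem 3.7, and isolate the genuinely dynamical input. As in the previous proofs I treat the case $t = t' = 0$, so that $\eta = \omega$, $\eta' = \omega'$ and the two hypotheses read $\omega' \notin \overline{o}_-(\omega)$ and $\omega \notin \overline{o}_-(\omega')$; the general case follows by the same bookkeeping with $\theta = 2^t$.

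First the necessity. Suppose $\nu_\omega$ and $\nu_{\omega'}$ are strongly singular, witnessed by a set $D$ with $\nu_\omega(D) = 0$ and $\nu_{\omega'}(D^c)=0$, i.e. $\nu_{\omega'}(D) = 1$. If we had $\omega' \in \overline{o}_-(\omega)$, then Theorem 3.7 gives $\nu_{\omega'} \prec \nu_\omega$, so $\nu_\omega(D) = 0$ would force $\nu_{\omega'}(D) = 0$, contradicting $\nu_{\omega'}(D)=1$. Symmetrically, $\omega \in \overline{o}_-(\omega')$ gives $\nu_\omega \prec \nu_{\omega'}$, and then $\nu_{\omega'}(D^c) = 0$ forces $\nu_\omega(D^c) = 0$, i.e. $\nu_\omega(D) = 1 \ne 0$. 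Hence both hypotheses hold. One may equally argue through supports: a common point $p \in \overline{o}_-(\omega) \cap \overline{o}_-(\omega')$ yields, by Theorem 3.7 and Theorem 2.6, $\mathrm{supp}\,\nu_p \subseteq \mathrm{supp}\,\nu_\omega \cap \mathrm{supp}\,\nu_{\omega'}$, which is nonempty, so Theorem 2.8 already rules out strong singularity.

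For the sufficiency I would construct the witnessing set $D$ explicitly, in the style of Theorems 3.6 and 3.7. Encode $D$ by a set of scales $S \subseteq \mathbb{N}$, taking $D$ to be the union of the dyadic blocks $(2^{n-1}, 2^n]$ over $n \in S$ (or, if one prefers to read off the value $1$ more transparently, the blocks $(2^{n-h(n)}, 2^n]$ with $h(n) \to \infty$ along $\omega'$). A direct computation shows that the profile $|D \cap 2^n| / 2^n$ is a geometric moving average of the indicator of $S$, so that $\nu_{\omega'}(D) = 1$ holds precisely when $o_-(\omega') \subseteq S^*$ and $\nu_\omega(D) = 0$ holds precisely when $o_-(\omega) \cap S^* = \emptyset$, where $S^* \subseteq \mathbb{N}^*$ is the clopen set determined by $S$ (and these conditions pass to the closures $\overline{o}_-$ since $S^*$ is closed). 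Thus it suffices to produce a clopen $S^*$ with $\overline{o}_-(\omega') \subseteq S^*$ and $\overline{o}_-(\omega) \cap S^* = \emptyset$; and since $\mathbb{N}^*$ is compact, Hausdorff and zero-dimensional, disjoint closed sets can always be separated by a clopen set, so such an $S^*$ exists as soon as $\overline{o}_-(\omega)$ and $\overline{o}_-(\omega')$ are disjoint.

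The main obstacle is exactly this last point: one must show that incomparability of $\omega$ and $\omega'$ forces $\overline{o}_-(\omega) \cap \overline{o}_-(\omega') = \emptyset$. In other words, the heart of the theorem is a trichotomy for backward orbit closures — for any two points of $\mathbb{N}^*$, either one lies in the backward orbit closure of the other, or the two closures are disjoint. The support computation in the necessity paragraph shows the converse implication and therefore pins this trichotomy down as the precise content of the statement: any point common to the two closures would sit below both measures and destroy strong singularity. I would establish the trichotomy using the dynamical structure of the flow $(\Omega^*, \{\phi^s\})$ developed in [6], together with the fact, already invoked in Theorem 3.5, that the flow has no periodic points; the delicate step is to rule out that two backward orbits which are not nested can nonetheless accumulate at a common point of $\mathbb{N}^*$. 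Once disjointness is secured, the construction of $D$ above closes the sufficiency, and combining the two directions with Theorem 3.7 yields the characterization of strong singularity — and hence the dichotomy of Theorem 3.2.
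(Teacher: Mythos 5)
Your necessity direction is correct and matches the paper's (Theorem 3.7, or equivalently the support formulations, Theorems 2.2 and 2.4 --- note your citations ``2.6'' and ``2.8'' do not exist in the paper's numbering). Your reductions in the sufficiency direction are also sound as far as they go: the moving-average identity $\nu_\omega(D)=\sum_{k\ge 0}2^{-k-1}\mathbf{1}[\tau^{-k}\omega\in S^*]$ for $D=\bigcup_{n\in S}(2^{n-1},2^n]$ is valid (the interchange of $\omega$-limit and sum is justified by the uniform geometric tail), and clopen separation of disjoint closed sets in the zero-dimensional compact Hausdorff space $\mathbb{N}^*$ is standard. But there is a genuine gap at exactly the point you flag: the ``trichotomy'' that $\omega'\notin\overline{o}_-(\omega)$ and $\omega\notin\overline{o}_-(\omega')$ force $\overline{o}_-(\omega)\cap\overline{o}_-(\omega')=\emptyset$ is asserted, not proved, and it is the entire content of the theorem. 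Appealing to the absence of periodic points (as in Theorem 3.5) cannot suffice: that only rules out $\eta=\phi^{-s}\eta$ for $s>0$, which is far weaker than excluding a common accumulation point of two non-nested backward orbits. Worse, within this paper the disjointness statement is Theorem 3.9, whose nontrivial direction is deduced \emph{from} Theorem 3.8 (a common point $\omega''$ would give $\nu_{\omega''}\prec\nu_\omega$ and $\nu_{\omega''}\prec\nu_{\omega'}$, contradicting disjoint supports), so invoking it here would be circular.

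What fills the gap is the paper's inductive ultrafilter construction, which works directly from the two pointwise hypotheses and never separates the closures in advance: one builds decreasing sequences $X_0\supseteq X_1\supseteq\cdots$ in $\omega$ and $X_0'\supseteq X_1'\supseteq\cdots$ in $\omega'$ such that $(X_0\cup\tau^{-1}X_1\cup\cdots\cup\tau^{-m}X_m)\cap(X_0'\cup\tau^{-1}X_1'\cup\cdots\cup\tau^{-m}X_m')=\emptyset$ for every $m$; the choice of $X_{m+1}$ is possible because each of the finitely many sets $\tau^{i}X_j'$ to be avoided fails to belong to $\omega$, precisely by the hypothesis $\omega\notin\bigcup_{i\ge 0}\tau^i{X'}^*$ (and symmetrically for $X_{m+1}'$). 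Setting $Y=\bigcup_{i}\tau^{-i}X_i$ and $Y'=\bigcup_i\tau^{-i}X_i'$, which are disjoint, the dyadic block sets $I$, $J$ over $Y$, $Y'$ satisfy $\nu_\omega(I)=1$, $\nu_{\omega'}(J)=1$ and $I\cap J=\emptyset$, which is strong singularity. Read through your own formula, this construction in fact produces disjoint clopen sets $Y^*\supseteq\overline{o}_-(\omega)$ and ${Y'}^*\supseteq\overline{o}_-(\omega')$ --- that is, it \emph{proves} your trichotomy rather than assuming it. So your outline identifies the right crux and the right shape of witness set $D$, but to complete the proof you must supply this (or an equivalent) combinatorial construction; nothing softer in the paper's toolkit does the job.
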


\begin{proof}
Necessity is obvious by Theorem 3.7. Hence we will show sufficiency. At first, by the assumption, we can take disjoint sets $X \in \omega$ and $X^{\prime} \in \omega^{\prime}$ such that 
$X^* \cap \overline{o}_-(\omega^{\prime}) = \emptyset, \ {X^{\prime}}^* \cap \overline{o}_-(\omega) = \emptyset$, i.e., $\cup_{i \ge 0} \tau^i X^* \not\ni \omega^{\prime}$ and $\cup_{i \ge 0} \tau^i {X^{\prime}}^* \not \ni \omega$. Now we put $X_0 = X, X_0^{\prime} = X^{\prime}$, and construct decreasing sequences $\{X_i\}_{i \ge 0}, \{{X_i}^{\prime}\}_{i \ge 0}$ of $\omega, \omega^{\prime}$ inductively such that for every $m \ge 0$,
\[(X_0 \cup \tau^{-1}X_1 \cup \cdots \cup \tau^{-m}X_m) \cap ({X_0}^{\prime} \cup \tau^{-1}{X_1}^{\prime} \cup \cdots \cup \tau^{-m}{X_m}^{\prime}) = \emptyset. \]
Assume that we have constructed to m, then we define sets $X_{m+1}, {X_{m+1}}^{\prime}$. At first for $X_{m+1}$, it is necessary and sufficient that
\[\tau^{-(m+1)}X_{m+1} \cap {X_0}^{\prime} = \emptyset, \tau^{-(m+1)}X_{m+1} \cap \tau^{-1}{X_1}^{\prime} = \emptyset, \cdots , \tau^{-(m+1)}X_{m+1} \cap \tau^{-m}{X_m}^{\prime} = \emptyset. \]
This is equivalent to the following:
\[X_{m+1} \cap \tau^{m+1}{X_0}^{\prime} = \emptyset, X_{m+1} \cap \tau^m {X_1}^{\prime} = \emptyset, \cdots, X_{m+1} \cap \tau {X_m}^{\prime} = \emptyset. \]
By the assumptions that $\omega \notin \cup_{i \ge 0} \tau^i {X^{\prime}}^*$ and ${X_0}^{\prime} \supseteq {X_1}^{\prime} \supseteq \cdots \supseteq {X_m}^{\prime}$, we can take a set $X_{m+1} \in \omega$ with $X_{m+1} \subseteq X_m$ satisfying this condition. In the same way, we can take a set ${X_{m+1}}^{\prime} \in \omega^{\prime}$ with ${X_{m+1}}^{\prime}
\subseteq {X_m}^{\prime}$ such that
\[\tau^{-(m+1)}{X_{m+1}}^{\prime} \cap X_0 = \emptyset, \tau^{-(m+1)}{X_{m+1}}^{\prime}  \cap \tau^{-1} X_1 = \emptyset, \cdots , \tau^{-(m+1)}{X_{m+1}}^{\prime} \cap \tau^{-m} X_m = \emptyset. \]
Then obviously $X_{m+1} \cap {X_{m+1}}^{\prime} = \emptyset$ and we have that
\[(X_0 \cup \tau^{-1}X_1 \cup \cdots \cup \tau^{-{(m+1)}}X_{m+1}) \cap ({X_0}^{\prime} \cup \tau^{-1}{X_1}^{\prime} \cup \cdots \cup \tau^{-{(m+1)}}{X_{m+1}}^{\prime}) = \emptyset. \]
Now for these $\{X_i\}_{i \ge 0}, \{{X_i}^{\prime}\}_{i \ge 0}$, define
\[Y = \cup_{i=0}^{\infty} \tau^{-i}X_i, \quad Y^{\prime} = \cup_{i=0}^{\infty} \tau^{-i}{X_i}^{\prime}\]
then $Y \cap Y^{\prime} = \emptyset$, and define
\[I = \cup_{n \in Y} (2^{n-1}, 2^n], \quad J = \cup_{n \in Y^{\prime}} (2^{n-1}, 2^n]. \]
Since $Y \cap Y^{\prime} = \emptyset$, then $I \cap J = \emptyset$. For every $m \ge 1$, $n \in X_{m-1}$ implies that $n-m+1, \cdots, n \in Y$. Hence
\[n \in X_{m-1} \Longrightarrow (2^{n-m}, 2^n] \subseteq I. \]
Then we have 
\[\nu_{\omega}(I) = \omega-\lim_n \frac{|I \cap 2^n|}{2^n} \ge \liminf_{n \in X_{m-1}} \frac{|I \cap 2^n|}{2^n} \ge \frac{2^n - 2^{n-m}}{2^n}=1-\frac{1}{2^m}. \]
Since $m \ge 1$ is arbitrary, we get $\nu_{\omega}(I) =1$. In a similar way, we also get $\nu_{\omega^{\prime}}(J) = 1$. Thus we obtain $\nu_{\omega} \ \rotatebox{90}{$\vDash$} \ \nu_{{\omega}^{\prime}}$.
\end{proof}

Combining Theorem 3.7 and Theorem 3.8, we obtain Theorem 3.2. 
We remark that Theorem 3.7 is equivalent to the following.

\begin{thm}
$\nu_{\eta}$ and $\nu_{\eta^{\prime}}$ are strongly singular if and only if $\overline{o}_-(\eta) \cap \overline{o}_-(\eta^{\prime}) = \emptyset$.
\end{thm}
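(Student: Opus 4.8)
The plan is to prove the two implications separately, reducing the easy direction to Theorem 3.8 and settling the hard direction through the support characterizations of Theorems 2.2 and 2.4. Throughout I would work directly with general $\eta,\eta^{\prime}\in\Omega^*$, since every tool I invoke is already stated at that level of generality; no reduction to the case $t=t^{\prime}=0$ is needed.

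For the direction $\overline{o}_-(\eta)\cap\overline{o}_-(\eta^{\prime})=\emptyset\Rightarrow$ strong singularity, I would first note that $\eta\in\overline{o}_-(\eta)$ and $\eta^{\prime}\in\overline{o}_-(\eta^{\prime})$, simply because $\phi^0$ fixes each point, so every point lies in its own backward orbit closure. Hence disjointness of the two closures forces $\eta^{\prime}\notin\overline{o}_-(\eta)$ and $\eta\notin\overline{o}_-(\eta^{\prime})$, which is exactly the hypothesis of Theorem 3.8; that theorem then yields $\nu_{\eta}\ \rotatebox{90}{$\vDash$}\ \nu_{\eta^{\prime}}$.

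For the converse I would argue by contradiction. Assume $\nu_{\eta}\ \rotatebox{90}{$\vDash$}\ \nu_{\eta^{\prime}}$; by Theorem 2.4 this means $\mathrm{supp}\,\nu_{\eta}\cap\mathrm{supp}\,\nu_{\eta^{\prime}}=\emptyset$. Suppose, toward a contradiction, that some $\zeta$ lies in $\overline{o}_-(\eta)\cap\overline{o}_-(\eta^{\prime})$. Applying Theorem 3.7 twice gives $\nu_{\zeta}\prec\nu_{\eta}$ and $\nu_{\zeta}\prec\nu_{\eta^{\prime}}$, and then Theorem 2.2 gives $\mathrm{supp}\,\nu_{\zeta}\subseteq\mathrm{supp}\,\nu_{\eta}$ and $\mathrm{supp}\,\nu_{\zeta}\subseteq\mathrm{supp}\,\nu_{\eta^{\prime}}$. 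Consequently $\mathrm{supp}\,\nu_{\zeta}\subseteq\mathrm{supp}\,\nu_{\eta}\cap\mathrm{supp}\,\nu_{\eta^{\prime}}=\emptyset$. But $\nu_{\zeta}$ is a probability measure, so $\hat{\nu}_{\zeta}$ is a Borel probability measure on the compact space $\beta\mathbb{N}$ and therefore has nonempty support (if every point had a null open neighborhood, compactness would extract a finite null cover of total measure $0$). This contradiction shows no such $\zeta$ exists, i.e. $\overline{o}_-(\eta)\cap\overline{o}_-(\eta^{\prime})=\emptyset$.

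The main obstacle is the converse direction, and it is worth flagging why it needs the measure-theoretic input rather than a soft dynamical argument. Read against Theorem 3.8, the content of the statement is that two points are $\sqsubseteq$-incomparable precisely when they have no common $\sqsubseteq$-lower bound; equivalently, that the backward orbit closures of two points cannot meet unless one point already lies in the other's closure. This fails for general continuous flows, where backward orbits of unrelated points routinely accumulate on a shared minimal set, so the proof must exploit something specific to $\Omega^*$. The device that supplies it is exactly the order-isomorphism recorded in the remark following Theorem 3.7: a shared orbit-closure point $\zeta$ produces, through $\mathrm{supp}\,\nu_{\zeta}$, a nonempty closed set forced to sit inside both supports, which is impossible once the supports are disjoint. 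Everything else is a routine invocation of the correspondences already established.
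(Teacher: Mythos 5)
Your proof is correct and takes essentially the same route as the paper: the easy direction reduces to Theorem 3.8 via the observation that each point lies in its own backward orbit closure, and the converse is exactly the paper's argument — take a common point $\zeta \in \overline{o}_-(\eta) \cap \overline{o}_-(\eta^{\prime})$, apply Theorem 3.7 twice to get $\nu_{\zeta} \prec \nu_{\eta}$ and $\nu_{\zeta} \prec \nu_{\eta^{\prime}}$, and contradict the disjointness of supports furnished by Theorem 2.4, using that $\mathrm{supp}\,\nu_{\zeta} \neq \emptyset$. Your explicit compactness justification of the nonemptiness of the support, and your remark that no reduction to $t = t^{\prime} = 0$ is needed, are minor refinements of details the paper leaves implicit.
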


\begin{proof}
Sufficiency is obvious. On the other hand, notice that necessity is apparently stronger than the claim in Theorem 3.7. Assume that $\nu_{\omega} \ \rotatebox{90}{$\vDash$} \ \nu_{\omega^{\prime}}$ and $\overline{o}_-(\omega) \cap \overline{o}_-(\omega^{\prime}) \not= \emptyset$. Then there exists a $\omega^{\prime{\prime}} \in \mathbb{N}^*$ such that $\omega^{\prime{\prime}} \in \overline{o}_-(\omega)$ and $\omega^{\prime{\prime}} \in \overline{o}_-(\omega^{\prime})$. But from Theorem 3.6 we have that $\nu_{\omega^{\prime{\prime}}} \prec \nu_{\omega}$ and $\nu_{\omega^{\prime{\prime}}} \prec \nu_{\omega^{\prime}}$. This implies that supp $\nu_{\omega^{\prime{\prime}}}$ $\subseteq$ supp $\nu_{\omega}$ $\cap$ supp $\nu_{\omega^{\prime}} \not= \emptyset$, which contradicts the assumption.
\end{proof}

\section{Applications}
In this section we shall give some consequences of the results of the previous section. As we have seen in section 3, properties of the orbit of $\eta$ for $\{\phi^s\}_{s \in \mathbb{R}}$ give rise to properties of the density measure $\nu_{\eta}$ in $\tilde{\mathcal{C}_0}$. In what follows, we particularly pay attention to recurrence of the orbit of a point $\eta$ in $\Omega^*$, and see how it affects properties of the density measure $\nu_{\eta}$. 

 We denote by $\mathcal{D}$ all the points $\eta$ in $\Omega^*$ which orbit $o_-(\eta)$ is not recurrent. This means that there exist a neighborhood $U$ of $\eta$ and a real number $L > 0$ such that $\phi^{-s} \eta$ does not enter $U$ for every $s > L$.

 We denote by $\mathcal{A}$ the set of all almost periodic points for the flow $(\Omega^*, \{\phi^s\}_{s \in \mathbb{R}})$. Recall that we say that a point $\eta$ in $\Omega^*$ is almost periodic if the orbit closure $\overline{o}(\eta)$ is a minimal closed invariant set.

\begin{thm}
(1) If $\eta \in \mathcal{D}$, then supp $\nu_{\eta^{\prime}}$ $\subsetneq$ supp $\nu_{\eta}$ for any $\eta^{\prime} \in \overline{o}_-(\eta) \setminus \{\eta\}$.

(2) If $\eta \notin \mathcal{D}$ then supp~$\nu_{\eta^{\prime}}$ $\subseteq$ supp~$\nu_{\eta}$ for any $\eta^{\prime} \in \overline{o}(\eta)$. In particular, supp~$\nu_{\eta^{\prime}}$ = supp~$\nu_{\eta}$ for any $\eta^{\prime} \in o(\eta)$. 

(3) If $\eta \in \mathcal{A}$, then supp $\nu_{\eta^{\prime}}$ = supp $\nu_{\eta}$ for any $\eta^{\prime} \in \overline{o}(\eta)$.
\end{thm}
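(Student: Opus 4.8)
The plan is to translate every statement about supports into a statement about backward orbit closures and then feed in the dynamical hypotheses. Combining Theorem 2.2 with Theorem 3.7 gives the master dictionary
\[ \mathrm{supp}\,\nu_{\eta'} \subseteq \mathrm{supp}\,\nu_{\eta} \iff \nu_{\eta'} \prec \nu_{\eta} \iff \eta' \in \overline{o}_-(\eta), \]
so that $\mathrm{supp}\,\nu_{\eta'} = \mathrm{supp}\,\nu_{\eta}$ holds exactly when $\eta' \in \overline{o}_-(\eta)$ and $\eta \in \overline{o}_-(\eta')$ simultaneously. Throughout I will use that $\eta \notin \mathcal{D}$ means precisely that $\eta$ is backward recurrent, i.e.\ there is a sequence $s_n \to \infty$ with $\phi^{-s_n}\eta \to \eta$, and that this property is invariant along the orbit: applying $\phi^t$ to $\phi^{-s_n}\eta \to \eta$ shows $o_-(\phi^t\eta)$ is recurrent whenever $o_-(\eta)$ is.

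For (2) I would first show that backward recurrence forces $\overline{o}(\eta) = \overline{o}_-(\eta)$. The inclusion $\supseteq$ is automatic; for $\subseteq$ it suffices to place each forward point $\phi^t\eta$ with $t>0$ into $\overline{o}_-(\eta)$, which follows by applying $\phi^t$ to $\phi^{-s_n}\eta \to \eta$: then $\phi^{-(s_n-t)}\eta \to \phi^t\eta$ with $s_n - t > 0$ eventually. Given $\eta' \in \overline{o}(\eta) = \overline{o}_-(\eta)$ the dictionary yields $\mathrm{supp}\,\nu_{\eta'}\subseteq \mathrm{supp}\,\nu_{\eta}$. For the ``in particular'' clause, if $\eta' \in o(\eta)$ then $\eta'\notin\mathcal{D}$ by flow-invariance, so $\overline{o}(\eta')=\overline{o}_-(\eta')$ contains $\eta$, the reverse inclusion of supports follows, and we obtain equality.

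The crux is (1), which I would isolate as the claim: if $\eta\neq\eta'$, $\eta'\in\overline{o}_-(\eta)$ and $\eta\in\overline{o}_-(\eta')$, then $o_-(\eta)$ is recurrent, i.e.\ $\eta\notin\mathcal{D}$. Granting this, its contrapositive gives $\eta\notin\overline{o}_-(\eta')$ for $\eta\in\mathcal{D}$ and $\eta'\in\overline{o}_-(\eta)\setminus\{\eta\}$, whence by the dictionary $\mathrm{supp}\,\nu_\eta\not\subseteq\mathrm{supp}\,\nu_{\eta'}$, and together with $\mathrm{supp}\,\nu_{\eta'}\subseteq\mathrm{supp}\,\nu_\eta$ this yields the strict inclusion. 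To prove the claim, choose $a_m \geq 0$ with $\phi^{-a_m}\eta\to\eta'$ and $b_n\geq 0$ with $\phi^{-b_n}\eta'\to\eta$. If $\eta'\notin o_-(\eta)$ then $a_m\to\infty$; fixing $n$ with $\phi^{-b_n}\eta'$ inside a prescribed neighborhood $U$ of $\eta$ and letting $m\to\infty$, continuity of $\phi^{-b_n}$ gives $\phi^{-(a_m+b_n)}\eta\to\phi^{-b_n}\eta'\in U$ with $a_m+b_n\to\infty$, exhibiting recurrence. If instead $\eta'=\phi^{-a}\eta$ with $a>0$, then the absence of periodic points forces $\eta\notin o_-(\eta')$, so $b_n\to\infty$ and $\phi^{-(a+b_n)}\eta=\phi^{-b_n}\eta'\to\eta$ shows recurrence directly. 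The delicate point, and the main obstacle, is exactly this double-sequence continuity step together with the bookkeeping that guarantees the times tend to infinity; this is where no-periodic-points is essential.

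Finally, for (3) almost periodicity supplies both ingredients at once. Since $\overline{o}(\eta)$ is minimal and $\eta'\in\overline{o}(\eta)$, every point of this set has orbit closure equal to the whole set, so $\overline{o}(\eta')=\overline{o}(\eta)$ and both $\eta,\eta'$ are almost periodic; as points of a minimal set they are backward recurrent (their $\alpha$-limit sets coincide with the minimal set), hence outside $\mathcal{D}$. Applying (2) to each gives $\overline{o}_-(\eta)=\overline{o}(\eta)=\overline{o}(\eta')=\overline{o}_-(\eta')$, so $\eta'\in\overline{o}_-(\eta)$ and $\eta\in\overline{o}_-(\eta')$, and the dictionary then yields $\mathrm{supp}\,\nu_{\eta'}=\mathrm{supp}\,\nu_\eta$.
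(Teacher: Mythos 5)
Your skeleton is the same as the paper's: you use the dictionary $\mathrm{supp}\,\nu_{\eta'} \subseteq \mathrm{supp}\,\nu_{\eta} \iff \nu_{\eta'} \prec \nu_{\eta} \iff \eta' \in \overline{o}_-(\eta)$ (Theorems 2.2 and 3.7), the observation that backward recurrence gives $\overline{o}_-(\eta) = \overline{o}(\eta)$ for part (2), and minimality (via $\alpha$-limit sets) for part (3). For part (1) you correctly isolate the one nontrivial dynamical fact --- that $\eta' \in \overline{o}_-(\eta)\setminus\{\eta\}$ and $\eta \in \overline{o}_-(\eta')$ together force backward recurrence of $\eta$ --- which the paper disposes of in a single unproved line (``$\eta \notin \overline{o}_-(\eta')$ since $\eta$ is not recurrent''), and your case split ($\eta' \notin o_-(\eta)$ versus $\eta' = \phi^{-a}\eta$ with the no-periodic-points argument) is exactly the right structure for proving it.

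There is, however, a genuine gap: the entire argument is phrased with sequences, and $\Omega^*$ is not first countable --- in fact it admits no nontrivial convergent sequences of the kind you use. If $\phi^{-s_n}\eta$ converged in $\Omega^*$ with $s_n \to \infty$, then (locally, choosing representatives near the limit point) the $\mathbb{N}^*$-coordinates $\tau^{[t-s_n]}\omega$ would have to converge in $\mathbb{N}^*$; these are pairwise distinct since $\tau$ has no periodic points on $\mathbb{N}^*$, and $\mathbb{N}^*$, being a closed subspace of $\beta\mathbb{N}$, contains no nontrivial convergent sequences. So your standing reformulation ``$\eta \notin \mathcal{D}$ means there is a sequence $s_n \to \infty$ with $\phi^{-s_n}\eta \to \eta$'' is never satisfied by any point --- taken literally it would put every point in $\mathcal{D}$ --- and the double-sequence continuity step at the crux of (1), as well as the recurrence-transport steps in (2) and (3), manipulate limits that do not exist. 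The proof is salvageable by systematic replacement of sequences with neighborhoods or nets: for instance, if $\eta' \in \overline{o}_-(\eta) \setminus o_-(\eta)$, then for every $L > 0$ the set $\{\phi^{-s}\eta : 0 \le s \le L\}$ is compact (a continuous image of $[0,L]$), hence closed and missing $\eta'$, so $\eta'$ adheres to $\{\phi^{-s}\eta : s > L\}$; and given a neighborhood $U$ of $\eta$, choose $b \ge 0$ with $\phi^{-b}\eta' \in U$, pull $U$ back through the homeomorphism $\phi^{-b}$ to a neighborhood $V$ of $\eta'$, and pick $s > L$ with $\phi^{-s}\eta \in V$, giving $\phi^{-(s+b)}\eta \in U$ with $s+b > L$. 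With that rewording your argument goes through and actually supplies details the paper omits; as written, the sequential formalism fails in precisely the kind of non-metrizable space this paper works in --- the same reason the paper uses ultrafilter limits rather than ordinary ones.
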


\begin{proof}
(1) By Theorem 3.7 for any $\eta^{\prime} \in \overline{o}_-(\eta)$, supp $\nu_{\eta^{\prime}}$ $\subseteq$ supp $\nu_{\eta}$. Also $\eta \notin \overline{o}_-(\eta^{\prime})$ since $\eta$ is 
not recurrent. Then again by Theorem 3.7 supp $\nu_{\eta}$ $\subsetneq$ supp $\nu_{\eta^{\prime}}$. Hence supp $\nu_{\eta^{\prime}}$ $\subsetneq$ supp $\nu_{\eta}$.

(2) Observe that if $\eta \in \mathcal{D}$ then $\overline{o}_-(\eta) = \overline{o}(\eta)$. Then first half of the claim follows immediately. Without loss of generality, we can assume that $\eta^{\prime} \in o_-(\eta)$. By Theorem 3.4 we get that supp $\nu_{\eta^{\prime}}$ $\subseteq$ supp $\nu_{\eta}$. On the other hand,
$\eta \in \overline{o}_-(\eta^{\prime})$ since $\eta \notin \mathcal{D}$. That is, by Theorem 3.7, supp $\nu_{\eta}$ $\subseteq$ supp $\nu_{\eta^{\prime}}$. Hence supp $\nu_{\eta}$ = supp $\nu_{\eta^{\prime}}$.

(3) Notice that for any $\eta^{\prime} \in \overline{o}(\eta) = \overline{o}_-(\eta)$, the orbit  $o_-(\eta^{\prime})$ is dense in $\overline{o}(\eta)$, that is, $\eta \in \overline{o}_-(\eta^{\prime})$ i.e., supp $\nu_{\eta}$ $\subseteq$ supp $\nu_{\eta^{\prime}}$. Then we get the result immediately.
\end{proof}

Now we shall deal with the additive property of elements of $\mathcal{C}_0$. This notion has been considered in the context of the completeness of the $L_p$-spaces over finitely additive measures (See [1]), Namely, $L_p(\mu)$ is complete if and only if $\mu$ has the additive property. The additive property for density measures was studied in [4], [6] and [8]. The definition of it is as follows:
We say that a measure $\mu$ has the additive property if for any increasing sequence $A_1 \subset A_2 \subset \cdots \subset A_k \subset \cdots$ of $\mathcal{P}(\mathbb{N})$, there exists a set $B \subseteq \mathbb{N}$ such that

\quad (1) $\mu(B) = \lim_k \mu(A_k)$,

\quad (2) $\mu(A_k \setminus B) = 0 \quad for \ every \ k \in \mathbb{N}$. 
\medskip

 The additive property of $\mu$ can be characterized by extending measure $\hat{\mu}$. As we have seen in section 2, we extend a measure space $(\mathbb{N}, \mathcal{P}(\mathbb{N}), \mu)$ to $(\beta\mathbb{N},
\mathcal{B}(\beta\mathbb{N}), \hat{\mu})$. We will need the following formulation of the additive property, which is a version of [2, Theorem 2].

\begin{thm}[{[1, Theorem 4.2]}]
A measure $\mu$ has the additive property if and only if $\hat{\mu}(U) = \hat{\mu}(\overline{U})$ for every open sets $U$ of supp $\mu$, where $\overline{U}$ denotes the closure of $U$ in supp $\mu$.
\end{thm}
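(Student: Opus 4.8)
The plan is to transfer everything to $\beta\mathbb{N}$ and exploit its extremal disconnectedness together with the fact that $\hat{\mu}$ is a Radon measure concentrated on $S := \operatorname{supp}\mu$. I would first recall the dictionary: each $A \in \mathcal{P}(\mathbb{N})$ corresponds to the clopen set $\overline{A} \subseteq \beta\mathbb{N}$ with $\hat{\mu}(\overline{A}) = \mu(A)$, these are exactly the clopen subsets of $\beta\mathbb{N}$, and their traces on $S$ form a clopen basis for $S$. The structural fact I lean on is that $\beta\mathbb{N}$ is extremally disconnected, so the closure in $\beta\mathbb{N}$ of any open set is again clopen, hence of the form $\overline{B}$ for a unique $B \subseteq \mathbb{N}$. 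I also use that $\hat{\mu}$, being the canonical extension to the Borel $\sigma$-algebra, is inner regular with respect to these clopen traces.

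For the implication from the regularity condition to the additive property, I would take an increasing sequence $A_1 \subseteq A_2 \subseteq \cdots$ and set $U = \bigcup_k \overline{A_k}$, an open subset of $\beta\mathbb{N}$. By extremal disconnectedness $\overline{U}$ is clopen, so $\overline{U} = \overline{B}$ for some $B \subseteq \mathbb{N}$. Since $\overline{A_k} \subseteq U \subseteq \overline{B}$ we get $A_k \subseteq B$, so condition (2) holds trivially (indeed $A_k \setminus B = \emptyset$). Monotone convergence gives $\lim_k \mu(A_k) = \hat{\mu}(U)$, while $\mu(B) = \hat{\mu}(\overline{B}) = \hat{\mu}(\overline{U})$, so (1) amounts precisely to $\hat{\mu}(U) = \hat{\mu}(\overline{U})$, which is what the hypothesis supplies when applied to the open set $U \cap S$ of $S$.

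For the converse, let $U$ be open in $S$. Using inner regularity and the fact that the clopen traces form a basis, a compactness argument produces an increasing sequence $A_1 \subseteq A_2 \subseteq \cdots$ with $\overline{A_k} \cap S \subseteq U$ and $\mu(A_k) = \hat{\mu}(\overline{A_k} \cap S) \nearrow \hat{\mu}(U)$. The additive property then yields $B$ with $\mu(B) = \lim_k \mu(A_k) = \hat{\mu}(U)$ and $\mu(A_k \setminus B) = 0$. The plan is to deduce from (2) that $U$ is contained, modulo a $\hat{\mu}$-null set, in the clopen set $\overline{B} \cap S$, whence $\hat{\mu}(\overline{U}^{\,S}) \le \hat{\mu}(\overline{B}) = \mu(B) = \hat{\mu}(U)$; the reverse inequality is automatic for open sets, giving the desired equality.

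The hard part will be reconciling the topological closure operation with the purely measure-theoretic containments, and in particular establishing the lemma that the closure computed in $\beta\mathbb{N}$ and the closure computed in $S$ agree modulo $\hat{\mu}$-null sets, i.e. $\hat{\mu}(\overline{U}^{\,\beta\mathbb{N}}) = \hat{\mu}(\operatorname{cl}_S(U \cap S))$ for the relevant open $U$; both directions above secretly reduce to this. The obstruction is that a point of $S$ may lie in the $\beta\mathbb{N}$-closure of $U$ only because $U$ accumulates to it through the $\hat{\mu}$-null region $\beta\mathbb{N} \setminus S$. Controlling such boundary points is exactly where extremal disconnectedness is needed: intersecting with a clopen neighbourhood $\overline{C}$ that misses $U \cap S$ forces $\overline{C} \cap U$ to collapse onto finitely many isolated points of $\mathbb{N}$ at each level, and combining this with the non-atomicity of $\mu$ rules out positive mass on the difference. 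This is also the precise place where finite additivity alone is insufficient and the additive property genuinely enters.
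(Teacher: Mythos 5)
The paper does not prove this statement---it quotes it from [1, Theorem 4.2]---so your proposal must be measured against the standard argument, and it has a genuine gap in the sufficiency direction. Your extremal-disconnectedness step buys nothing: since $\bigcup_k A_k \subseteq U = \bigcup_k \overline{A_k} \subseteq \overline{\bigcup_k A_k}$, the closure $\overline{U}$ is just $\overline{B}$ with $B = \bigcup_k A_k$, and this candidate $B$ is simply wrong. Take any $\mu$ satisfying the hypothesis with $\operatorname{supp}\mu \subseteq \mathbb{N}^*$ (a density measure with the additive property, or even the $0$--$1$ measure of a free ultrafilter) and $A_k = \{1,\dots,k\}$: then $\lim_k \mu(A_k) = 0$ while $\mu(B) = \mu(\mathbb{N}) = 1$, so condition (1) fails for your $B$, and the hypothesis is not violated because $U \cap S = \emptyset$. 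The same example refutes the ``hard lemma'' you say both directions reduce to, namely $\hat{\mu}(\overline{U}^{\,\beta\mathbb{N}}) = \hat{\mu}(\operatorname{cl}_S(U \cap S))$: here the left side is $1$ and the right side is $0$. It also destroys the heuristic of your last paragraph: in this example each $C \cap A_k$ really is finite (and in the density case $\mu$ really is non-atomic), yet the closure of $U$ carries full mass. In general, if $\overline{C}$ misses $U \cap S$ you may only conclude that each $C \cap A_k$ is $\mu$-null, not finite, and the assertion that closures of countable unions of null clopen sets are null is essentially the additive property itself---your argument is circular at the one point where the content lies.

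The repair is to build the clopen hull with the correct measure rather than taking the topological closure. Let $V = \bigcup_k (\overline{A_k} \cap S)$ and $\alpha = \lim_k \mu(A_k) = \hat{\mu}(V)$; the hypothesis gives $\hat{\mu}(\operatorname{cl}_S V) = \alpha$, and outer regularity of $\hat{\mu}$ together with compactness of $\operatorname{cl}_S V$ and the clopen basis yields a decreasing sequence of clopen sets $\overline{C_n} \supseteq \operatorname{cl}_S V$ with $\mu(C_n) \le \alpha + 2^{-n}$. The diagonal union $B = \bigcup_k (A_k \cap C_k)$ then works: $\overline{A_k} \cap S \subseteq V \subseteq \overline{C_n}$ for all $k,n$ makes $A_k \setminus C_k$ null, giving (2); the inclusions $B \subseteq C_n \cup (A_{n-1}\setminus C_n)$ give $\mu(B) \le \mu(C_n) \le \alpha + 2^{-n}$ for every $n$, while $\overline{A_k \cap C_k} \cap S = \overline{A_k} \cap S$ gives $V \subseteq \overline{B}$ and hence $\mu(B) \ge \alpha$, so (1) holds. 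Your necessity sketch is closer to viable but also leans on the false lemma at its open step; the correct ingredients there are a maximality argument (choose $A_k$ with $\mu(A_k) \to \sup\{\mu(A) : \overline{A} \cap S \subseteq U\} = \hat{\mu}(U)$, and note that a clopen trace of positive measure inside $U \setminus \overline{B}$ could be adjoined to the $A_k$, contradicting the supremum) combined with the observation that a nonempty open subset of $S = \operatorname{supp}\mu$ has positive measure, so that the open null set $U \setminus \overline{B}$ is actually empty; then $\operatorname{cl}_S U \subseteq \overline{B} \cap S$ and $\hat{\mu}(\operatorname{cl}_S U) \le \mu(B) = \hat{\mu}(U)$ follows. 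Note that it is these two facts, not extremal disconnectedness, that control the boundary behavior you worried about.
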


From this we can easily obtain the following result (See for instance [1, Proposition 4.4]).
\begin{thm}
A measure $\mu$ has the additive property if and only if $\hat{\mu}(E) = 0$ for all nowhere dense Borel sets E in supp $\mu$.
\end{thm}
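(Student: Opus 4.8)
The plan is to deduce this equivalence directly from the preceding characterization (Theorem 4.2), which states that the additive property of $\mu$ is equivalent to $\hat{\mu}(U) = \hat{\mu}(\overline{U})$ for every open set $U$ of $\text{supp}\,\mu$. First I would reformulate that condition in terms of topological boundaries. Writing $K = \text{supp}\,\mu$, for an open set $U \subseteq K$ the difference $\overline{U} \setminus U$ is exactly the boundary $\partial U$, and since $\hat{\mu}$ is a countably additive Borel measure on $K$ we have $\hat{\mu}(\overline{U}) = \hat{\mu}(U) + \hat{\mu}(\partial U)$. Hence the additive property is equivalent to the statement that $\hat{\mu}(\partial U) = 0$ for every open $U \subseteq K$.

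The bridge between boundaries and nowhere dense sets is an elementary topological fact that I would verify next: the boundaries of open subsets of $K$ are precisely the closed nowhere dense subsets of $K$. Indeed, for any open $U$ the set $\partial U$ is closed, and it has empty interior (any open set contained in $\overline{U}$ must meet $U$), so $\partial U$ is nowhere dense. Conversely, if $F \subseteq K$ is closed and nowhere dense then $K \setminus F$ is open and dense, whence $\partial(K \setminus F) = \overline{K \setminus F} \setminus (K \setminus F) = K \setminus (K \setminus F) = F$; thus $F$ is itself the boundary of an open set.

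With these two observations the equivalence follows. For the forward implication, assume the additive property; then every closed nowhere dense set, being a boundary, is $\hat{\mu}$-null, and for an arbitrary nowhere dense Borel set $E$ its closure $\overline{E}$ is closed and nowhere dense, so by monotonicity $\hat{\mu}(E) \le \hat{\mu}(\overline{E}) = 0$. For the converse, assume $\hat{\mu}(E) = 0$ for all nowhere dense Borel $E$; then in particular $\hat{\mu}(\partial U) = 0$ for every open $U$, since $\partial U$ is a closed, hence Borel, nowhere dense set, and the reformulation of Theorem 4.2 above yields the additive property.

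I do not expect a serious obstacle, as both directions reduce to the boundary reformulation together with the identification of boundaries of open sets with closed nowhere dense sets. The only points requiring a little care are the passage from a general nowhere dense Borel set to a closed one, handled by applying monotonicity of $\hat{\mu}$ to the closure, and the reminder that all topological notions (interior, closure, boundary, nowhere dense) must be taken relative to $K = \text{supp}\,\mu$ rather than to $\beta\mathbb{N}$, exactly as in the statement of Theorem 4.2.
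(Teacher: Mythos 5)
Your proposal is correct, and it follows exactly the route the paper intends: the paper gives no proof of this statement, noting only that it follows easily from Theorem 4.2 (citing [1, Proposition 4.4]), and your deduction via the boundary reformulation $\hat{\mu}(\overline{U}) = \hat{\mu}(U) + \hat{\mu}(\partial U)$, the identification of boundaries of open sets in $\mathrm{supp}\,\mu$ with closed nowhere dense subsets, and monotonicity to pass from a general nowhere dense Borel set to its closure is precisely that standard argument. All steps check out, including your care in taking closures, interiors, and boundaries relative to $\mathrm{supp}\,\mu$ rather than $\beta\mathbb{N}$.
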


It is noted that several other conditions which are equivalent to the additive property are known. See [1] for further details.

In addition we will need the following two theorems in connection with results in section 3 (See [1, Proposition 8.1, 8.6, 8.9, and 8.10]).

\begin{thm}
Let $\mu$, $\nu$ be measures such that $\nu \ll \mu$ and $\mu$ has the additive property. Then $\nu$ also has the additive property.
\end{thm}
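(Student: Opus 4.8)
The plan is to deduce the additive property of $\nu$ from that of $\mu$ via the nowhere-dense characterization of Theorem 4.3, using absolute continuity to transport null sets. First I would record two consequences of the hypothesis $\nu \ll \mu$: by Theorem 2.1 we have $\hat{\nu} \ll \hat{\mu}$ for the associated countably additive measures on $\mathcal{B}(\beta\mathbb{N})$; and since $\nu \ll \mu$ trivially implies $\nu \prec \mu$ (if $\mu(A)=0$ then $\mu(A)<\delta$ for every $\delta$, hence $\nu(A)<\varepsilon$ for every $\varepsilon$, so $\nu(A)=0$), Theorem 2.2 gives supp~$\nu \subseteq$ supp~$\mu$. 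In particular supp~$\nu$ is a closed subspace of supp~$\mu$.

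The heart of the argument is a purely topological observation: if $Y \subseteq X$ is a closed subspace and $F \subseteq Y$ is nowhere dense in $Y$, then $F$ is nowhere dense in $X$. Indeed, since $Y$ is closed, the closure of $F$ in $X$ coincides with its closure $\overline{F}^{\,Y}$ in $Y$ and is contained in $Y$; any non-empty $X$-open set contained in $\overline{F}^{\,Y}$ would then be a non-empty $Y$-open subset of $\overline{F}^{\,Y}$, contradicting nowhere density in $Y$. Applying this with $X = \text{supp }\mu$ and $Y = \text{supp }\nu$, every nowhere dense Borel set $F$ in supp~$\nu$ is also a nowhere dense Borel set in supp~$\mu$ (it is Borel there because supp~$\nu$ is a closed, hence Borel, subset of supp~$\mu$).

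Now I would chain the implications. Let $F$ be an arbitrary nowhere dense Borel set in supp~$\nu$. By the topological step, $F$ is a nowhere dense Borel set in supp~$\mu$, so Theorem 4.3 applied to $\mu$ (which has the additive property) yields $\hat{\mu}(F)=0$. From $\hat{\nu} \ll \hat{\mu}$ we conclude $\hat{\nu}(F)=0$. Since $F$ was an arbitrary nowhere dense Borel set in supp~$\nu$, Theorem 4.3 applied to $\nu$ shows that $\nu$ has the additive property.

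The only delicate point is the topological transfer of nowhere density from the smaller support to the larger one; this is exactly where the closedness of supp~$\nu$ inside supp~$\mu$ -- guaranteed by weak absolute continuity through Theorem 2.2 -- is essential. The remainder is a routine chaining of Theorems 2.1 and 4.3 with the definition of absolute continuity, so I expect no further difficulty beyond that observation.
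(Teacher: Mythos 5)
Your proof is correct. There is, however, no internal proof in the paper to compare it against: the author states this theorem as a known result, citing [1, Propositions 8.1, 8.6, 8.9, and 8.10] (Basile and Bhaskara Rao), and gives no argument. What your route buys is a short, self-contained derivation using only machinery the paper itself states: Theorem 2.1 converts $\nu \ll \mu$ into $\hat{\nu} \ll \hat{\mu}$; the trivial implication $\nu \ll \mu \Rightarrow \nu \prec \mu$ combined with Theorem 2.2 gives $\mathrm{supp}\,\nu \subseteq \mathrm{supp}\,\mu$, and this inclusion is closed since supports of Borel measures are closed in $\beta\mathbb{N}$; and Theorem 4.3 is used on both ends as the nowhere-dense-null characterization of the additive property. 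The only genuinely delicate step --- transferring nowhere density from the closed subspace $\mathrm{supp}\,\nu$ up to $\mathrm{supp}\,\mu$ --- you state and prove correctly: closedness forces the closure of $F$ taken in $\mathrm{supp}\,\mu$ to coincide with its closure in $\mathrm{supp}\,\nu$, so a nonempty open subset of $\mathrm{supp}\,\mu$ inside that closure would already be a nonempty relatively open subset of $\mathrm{supp}\,\nu$, a contradiction; and the measurability transfer is sound because $\mathrm{supp}\,\nu$, being closed, is Borel in $\mathrm{supp}\,\mu$, so Borel subsets of the subspace are Borel in the ambient space. By contrast, the cited source establishes the result within a general theory of finitely additive measures, independently of this paper's framework; your argument is shorter here precisely because Theorems 2.1, 2.2, and 4.3 have already packaged the hard content (in particular, your proof inherits whatever work goes into Theorem 4.3, which the paper also quotes rather than proves).
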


\begin{thm}
For any singular measures $\mu$, $\nu$, $\mu + \nu$ has the additive property if and only if both $\mu$ and $\nu$ have the additive property and $\mu$ and $\nu$ are strongly singular.
\end{thm}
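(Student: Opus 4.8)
The plan is to translate the whole statement into the Stone--\v{C}ech picture of Section~2 and to use Theorem~4.3 (additive property $\Longleftrightarrow$ every nowhere dense Borel set of the support is $\hat\rho$-null) as the working criterion, together with Theorem~2.4 (strong singularity $\Longleftrightarrow$ disjoint supports). Put $\rho=\tfrac12(\mu+\nu)$, a probability measure with $\hat\rho=\tfrac12(\hat\mu+\hat\nu)$; since $\mu,\nu\ge 0$ a point fails to lie in $\operatorname{supp}\rho$ exactly when some neighbourhood is null for both $\hat\mu$ and $\hat\nu$, so $\operatorname{supp}\rho=\operatorname{supp}\mu\cup\operatorname{supp}\nu$, and $\rho$ has the additive property iff $\mu+\nu$ does (the increasing-sequence condition is invariant under positive scaling). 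Write $K=\operatorname{supp}\rho$, $K_\mu=\operatorname{supp}\mu$, $K_\nu=\operatorname{supp}\nu$. I would prove the two implications separately.

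For the \emph{reverse} direction I expect a complete and routine argument. Assume $\mu,\nu$ have the additive property and are strongly singular, so by Theorem~2.4 we have $K_\mu\cap K_\nu=\emptyset$. Then $K_\mu$ and $K_\nu$ are disjoint closed sets with $K_\mu\cup K_\nu=K$, hence each is \emph{relatively clopen} in $K$. Let $E\subseteq K$ be a nowhere dense Borel set. Because $K_\mu$ is open in $K$, every subset of $K_\mu$ that is open in $K_\mu$ is open in $K$, so the interior in $K_\mu$ of $\overline{E\cap K_\mu}$ is contained in the interior in $K$ of $\overline E$, which is empty; thus $E\cap K_\mu$ is nowhere dense in $K_\mu$. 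Theorem~4.3 applied to $\mu$ gives $\hat\mu(E\cap K_\mu)=0$, while $\hat\mu(E\setminus K_\mu)=0$ because $\hat\mu$ is concentrated on its support, so $\hat\mu(E)=0$; symmetrically $\hat\nu(E)=0$. Hence $\hat\rho(E)=0$, and Theorem~4.3 (now in $K$) shows that $\rho$, and therefore $\mu+\nu$, has the additive property.

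For the \emph{forward} direction, suppose $\mu+\nu$, equivalently $\rho$, has the additive property. Since $\mu(A)\le 2\rho(A)$ and $\nu(A)\le 2\rho(A)$ for every $A$, we have $\mu\ll\rho$ and $\nu\ll\rho$ trivially, so Theorem~4.4 already yields that both $\mu$ and $\nu$ have the additive property. It remains to establish strong singularity, i.e.\ $K_\mu\cap K_\nu=\emptyset$. Here I would argue by contradiction. As $\mu\perp\nu$, Theorem~2.3 gives $\hat\mu\perp\hat\nu$, so fix disjoint Borel carriers with $P\sqcup Q=K$, $\hat\mu(Q)=0$, $\hat\nu(P)=0$; concretely $\hat\mu(B)=\hat\rho(B\cap P)$ and $\hat\nu(B)=\hat\rho(B\cap Q)$. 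Suppose $x_0\in K_\mu\cap K_\nu$. Then every neighbourhood of $x_0$ meets $P$ in positive $\hat\mu$-measure and meets $Q$ in positive $\hat\nu$-measure, so both carriers cluster at $x_0$. The goal is to convert this clustering into a nowhere dense Borel set of positive $\hat\rho$-measure (contradicting Theorem~4.3), or equivalently an open set $U\subseteq K$ with $\hat\rho(\overline U)>\hat\rho(U)$ (contradicting Theorem~4.2), which then forces $K_\mu\cap K_\nu=\emptyset$.

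The construction just indicated is the step I expect to be the main obstacle. The natural tool is extremal disconnectedness of $\beta\mathbb{N}$: closures of open sets are clopen, and the clopen sets are exactly the $\overline A$ with $A\subseteq\mathbb{N}$, so $x_0$ has a neighbourhood base of clopen sets $\overline{A_1}\supseteq\overline{A_2}\supseteq\cdots$. The idea is to choose, inside these shrinking neighbourhoods and using that $P$ and $Q$ are both dense near $x_0$, an interleaved family of clopen pieces that are alternately charged by $\hat\mu$ and by $\hat\nu$, and to let $U$ be the union of the ``$\mu$-charged'' pieces. One then wants $x_0$, together with a definite positive amount of $\hat\nu$-mass, to pile up on the boundary $\overline U\setminus U$, so that $\hat\rho(\overline U)>\hat\rho(U)$ in violation of Theorem~4.2. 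The delicate point is quantitative control of this boundary mass---guaranteeing that the interleaving pushes a fixed fraction of the singular partner's mass onto $\partial U$ rather than letting it escape into the interior---and it is precisely here that the singular decomposition $P\sqcup Q$ and the rich clopen structure of $\beta\mathbb{N}$ must be combined carefully. Granting this, the two implications together with Theorem~2.4 complete the proof.
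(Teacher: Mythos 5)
Your reverse implication and the first half of your forward implication are fine, and since the paper itself offers no proof of this statement (Theorem 4.5 is quoted from reference [1], Propositions 8.9 and 8.10), a self-contained argument is welcome. In particular: the reduction to $\rho=\tfrac12(\mu+\nu)$, the identity $\operatorname{supp}\rho=\operatorname{supp}\mu\cup\operatorname{supp}\nu$, the observation that disjoint closed supports are relatively clopen in their union so that a nowhere dense $E$ splits into pieces nowhere dense in each support, and the application of Theorem 4.3 are all correct; so is the derivation that $\mu$ and $\nu$ individually inherit the additive property from $\mu\le 2\rho$, $\nu\le 2\rho$ and Theorem 4.4.

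The genuine gap is exactly where you flag it: the claim that the additive property of $\mu+\nu$ together with $\mu\perp\nu$ forces disjoint supports is never proved, and ``granting this'' concedes the only hard part of the theorem. Moreover, the soft consequences available from your setup genuinely fall short. For instance, from the carriers $P,Q$ and Theorem 4.2 one does get, for every $\varepsilon>0$, an open $U\supseteq P$ with $\hat\nu(U)<\varepsilon$, hence $\hat\mu(U)=1$, hence $\overline U\supseteq K_\mu$ and (by $\hat\rho(\overline U)=\hat\rho(U)$) $\hat\nu(\overline U)<\varepsilon$; letting $\varepsilon\to 0$ yields $\hat\nu(K_\mu)=0$ --- but this is perfectly consistent with $K_\mu\cap K_\nu\neq\emptyset$, since mutually singular measures can have supports meeting in a common closed null (hence nowhere dense) set. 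So ``both carriers cluster at $x_0$'' cannot by itself produce an open set with $\hat\rho(\overline U)>\hat\rho(U)$, and it is unclear your interleaving can be made to deposit a fixed fraction of $\hat\nu$-mass on $\partial U$. A clean repair bypasses the topology and uses the definition of the additive property twice, on increasing sequences of actual subsets of $\mathbb{N}$. First, given $\varepsilon>0$, choose $D_n$ with $\mu(D_n)<\varepsilon 2^{-n}$ and $\nu(D_n^c)<\varepsilon 2^{-n}$, put $E_k=\bigcap_{n\le k}D_n$ and $A_k=E_k^c$; then $A_k$ increases, $\mu(A_k)\to 1$ and $\nu(A_k)<\varepsilon$, so the additive property of $\lambda=\mu+\nu$ gives $B_\varepsilon$ with $\lambda(A_k\setminus B_\varepsilon)=0$ and $\lambda(B_\varepsilon)=\lim_k\lambda(A_k)$, forcing $\mu(B_\varepsilon)=1$ and $\nu(B_\varepsilon)\le\varepsilon$. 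Second, take $\varepsilon=2^{-n}$ and replace $B_n$ by $\bigcap_{k\le n}B_k$, so that $\mu(B_n^c)=0$, $\nu(B_n)\le 2^{-n}$ and the $B_n$ decrease; applying the additive property of $\lambda$ to the increasing sequence $B_n^c$ yields $B$ with $\nu(B)=1$ and $\mu(B)=0$, i.e.\ $D=B$ witnesses strong singularity as in Definition 2.4. With that substituted for your boundary-mass construction, the proof is complete.
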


We have proved in [6, Theorem 4.1] the following result.
\begin{thm}
$\nu_{\eta}$ has the additive property if and only if $\eta \in \mathcal{D}$.
\end{thm}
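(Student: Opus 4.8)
The plan is to derive the equivalence from the topological characterization of the additive property in Theorem 4.3, namely that $\nu_\eta$ has the additive property exactly when $\hat{\nu}_\eta(E) = 0$ for every nowhere dense Borel set $E \subseteq \mathrm{supp}\,\nu_\eta$. As in Section 3 I would first reduce to $\eta = (\omega,0)$, and translate the two sides of the desired equivalence into orbit language. On the dynamical side one checks, just as in the proofs of Theorems 3.6--3.8, that for $A \subseteq \mathbb{N}$ one has $A + n \in \omega \iff \tau^{-n}\omega \in A^*$, so that $\eta \notin \mathcal{D}$ (recurrence of $o_-(\eta)$ at $\eta$) is equivalent to the combinatorial statement that $\{n \ge 1 : X + n \in \omega\}$ is unbounded for every $X \in \omega$, while $\eta \in \mathcal{D}$ means that some $X \in \omega$ admits only finitely many such $n$.

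The computational backbone is the family of block sets $I_Y = \bigcup_{n \in Y}(2^{n-1},2^n]$ already used in Section 3, for which $\nu_\omega(I_Y) = \omega\text{-}\lim_m \sum_{j \ge 0}[\,m-j \in Y\,]\,2^{-j-1}$; this exhibits $\hat{\nu}_\omega$ as spread along the backward orbit with geometrically decaying weights, the top block (governed by $\omega$, hence by $\eta$ itself) carrying the largest, order-one share of the mass. Qualitatively this is reflected in Theorem 4.1: when $\eta \in \mathcal{D}$ the supports $\mathrm{supp}\,\nu_{\phi^{-s}\eta}$ strictly decrease along $o_-(\eta)$, giving a nested, scattered arrangement of the orbit fibres, whereas when $\eta \notin \mathcal{D}$ they are all equal to $\mathrm{supp}\,\nu_\eta$, so infinitely many orbit points (the returns) accumulate at the top fibre. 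I would make precise the structural statement that $\mathrm{supp}\,\nu_\eta$ is carried, fibrewise over $\overline{o}_-(\eta)$, by these block contributions, which is what links nowhere density in $\mathrm{supp}\,\nu_\eta$ to the return behaviour of the orbit.

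For the direction $\eta \notin \mathcal{D} \Rightarrow$ failure of the additive property, I would use the unbounded return times to build a nested sequence of clopen sets $\overline{I_{Y_1}} \supseteq \overline{I_{Y_2}} \supseteq \cdots$ with $Y_1 \supseteq Y_2 \supseteq \cdots$ in $\omega$, each $Y_{k+1}$ chosen along a deeper return so that $\bigcap_k \overline{I_{Y_k}}$ is a Cantor-type, hence nowhere dense, Borel subset of $\mathrm{supp}\,\nu_\omega$, while the returns keep $\hat{\nu}_\omega(\overline{I_{Y_k}})$ bounded below by a fixed positive constant; the intersection then has positive measure and additivity fails. For the direction $\eta \in \mathcal{D} \Rightarrow$ the additive property, I would use Theorem 4.1(1): the strict, nested decrease of the orbit supports lets one show that any Borel set of positive measure must contain a clopen piece and hence cannot be nowhere dense. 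Here it is convenient to work with Theorem 4.2 and the extremal disconnectedness of $\beta\mathbb{N}$, so that $\overline{U}$ is clopen, say $\overline{U} = \overline{B}$, and the additive property reduces to $\lim_k \nu_\omega(A_k) = \nu_\omega(B)$ for increasing $A_k$ with $\overline{\bigcup_k \overline{A_k}} = \overline{B}$; non-recurrence is exactly what should force the boundary $\overline{B} \setminus \bigcup_k \overline{A_k}$ to be $\hat{\nu}_\omega$-null.

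The main obstacle I anticipate is the recurrent construction: converting the purely qualitative unboundedness of the return set into a quantitative, approximately self-similar choice of the $Y_k$ that simultaneously thins enough to guarantee nowhere density of the intersection yet, via the returns, prevents the measure from leaking to $0$. A second delicate point is that no soft measure-theoretic argument can replace this: since $\hat{\nu}_\eta$ is non-atomic, on a metrizable space it would necessarily charge some nowhere dense set, so the whole dichotomy rests on the non-metrizable, totally disconnected topology of $\mathbb{N}^*$, and the non-recurrent case must genuinely exploit that every positive-measure subset of $\mathrm{supp}\,\nu_\eta$ contains a clopen set.
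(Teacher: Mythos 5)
Your proposal is a plan rather than a proof: both implications are left at the level of ``this should work,'' and the two places where you defer are exactly where all the mathematical content lies. In the recurrent direction, the nested family $\overline{I_{Y_1}} \supseteq \overline{I_{Y_2}} \supseteq \cdots$ with $\bigcap_k \overline{I_{Y_k}}$ nowhere dense in $\mathrm{supp}\,\nu_\omega$ but of positive $\hat{\nu}_\omega$-measure is asserted, not constructed; as you yourself note, the returns give an order-one lower bound for a \emph{single} block set, but intersecting infinitely many thinned versions makes the measure leak geometrically unless the $Y_k$ are chosen with quantitative care, and nowhere density of the intersection (i.e., that it contains no trace $\overline{A}\cap\mathrm{supp}\,\nu_\omega$ of a clopen set) also requires a coordinated argument, since each $\overline{I_{Y_k}}$ individually has nonempty interior. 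In the non-recurrent direction your key sentence --- that $\eta \in \mathcal{D}$ ``forces'' every positive-measure Borel set to contain a clopen piece --- is just a restatement of the conclusion of Theorem 4.3, not a derivation of it from non-recurrence; Theorem 4.1(1) gives strict nesting of supports along $o_-(\eta)$ but by itself says nothing about the measure of boundaries. There is also a concrete topological error: you invoke extremal disconnectedness to make $\overline{U}$ clopen, but the open sets $U$ in Theorem 4.2 are open in $\mathrm{supp}\,\nu_\eta \subseteq \mathbb{N}^*$, and while $\beta\mathbb{N}$ is extremally disconnected, $\mathbb{N}^*$ and its closed subspaces are not, so $\overline{U}$ need not be clopen and the reduction you base the sufficiency on fails as stated.

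For comparison: the paper does not prove Theorem 4.6 here at all --- the full proof is deferred to [6, Theorem 4.1] --- and what it does prove in this paper (Theorem 4.7, the direction $\eta \notin \mathcal{D} \Rightarrow$ no additive property) uses a completely soft argument that sidesteps your hard construction. Namely: if $\eta \notin \mathcal{D}$ and $\nu_\eta$ had the additive property, pick $\eta' \in o_-(\eta)\setminus\{\eta\}$; then $\nu_{\eta'} \ll \nu_\eta$ (Theorem 3.3), so $\nu_{\eta'}$ inherits the additive property (Theorem 4.4), and recurrence gives $\mathrm{supp}\,\nu_{\eta'} = \mathrm{supp}\,\nu_\eta$ (Theorem 4.1(2)); since both extensions then vanish on nowhere dense subsets of the common support, $\hat{\nu}_\eta$ and $\hat{\nu}_{\eta'}$ are mutually absolutely continuous, whence $\eta = \eta'$ by the no-periodic-points rigidity (Theorem 3.5) --- a contradiction. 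So for the half of the equivalence that is actually proved in this paper, the Section 3 machinery replaces the Cantor-type construction you got stuck on; if you want a self-contained proof of the other half (sufficiency), you would need to reconstruct the argument of [6], and your current sketch does not yet contain its key steps.
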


 In particular, the necessity is essentially due to [4, Theorem 6]. Here we shall give a different and simple proof of it as an application of results in section 3.

\begin{thm}
If $\eta \notin \mathcal{D}$ then $\nu_{\eta}$ does not have the additive property.
\end{thm}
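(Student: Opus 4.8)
The plan is to avoid any explicit sliding-block construction of the kind used in Section~3 and instead read off the failure of the additive property from the characterization in Theorem~4.3, fed by the support information of Theorem~4.1(2). Concretely, I would compare $\nu_\eta$ with a single backward translate $\zeta=\phi^{-s}\eta$ for one fixed $s>0$. The hypothesis $\eta\notin\mathcal D$ enters \emph{only} through Theorem~4.1(2): since $\zeta\in o(\eta)$, that result gives $\mathrm{supp}\,\nu_\zeta=\mathrm{supp}\,\nu_\eta=:S$. On the other hand, because the flow has no periodic points, $\eta\notin o_-(\zeta)$, so Theorem~3.3 yields $\nu_\eta\not\ll\nu_\zeta$. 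Thus $\nu_\eta$ and $\nu_\zeta$ have the \emph{same} support yet are \emph{not} mutually absolutely continuous, and the whole idea is to convert this mismatch into a nowhere dense set of positive $\hat{\nu_\eta}$-mass.

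Carrying this out, I would first invoke Theorem~2.1 to transfer the relation to the countably additive extensions, $\hat{\nu_\eta}\not\ll\hat{\nu_\zeta}$, producing a Borel set $N$, which I may take inside $S$, with $\hat{\nu_\zeta}(N)=0$ but $\hat{\nu_\eta}(N)>0$. Using inner regularity of the Radon extension $\hat{\nu_\eta}$, I would then pick a compact $K\subseteq N$ with $\hat{\nu_\eta}(K)>0$, noting $\hat{\nu_\zeta}(K)=0$ automatically. The decisive observation is that $K$ must be nowhere dense in $S$: if $K$ had nonempty interior $V$ in $S$, then, since $S=\mathrm{supp}\,\nu_\zeta$, every nonempty relatively open subset of $S$ has positive $\hat{\nu_\zeta}$-measure, so $\hat{\nu_\zeta}(V)>0$, contradicting $\hat{\nu_\zeta}(K)=0$. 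Hence $K$ is a nowhere dense Borel subset of $\mathrm{supp}\,\nu_\eta$ carrying positive mass, and Theorem~4.3 immediately gives that $\nu_\eta$ fails the additive property.

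The main obstacle, and the one point where $\eta\notin\mathcal D$ is genuinely needed, is exactly the upgrade from ``$\hat{\nu_\zeta}$-null'' to ``nowhere dense,'' which rests on the equality $\mathrm{supp}\,\nu_\zeta=\mathrm{supp}\,\nu_\eta$. This is precisely what breaks for $\eta\in\mathcal D$: there Theorem~4.1(1) gives the \emph{strict} inclusion $\mathrm{supp}\,\nu_\zeta\subsetneq\mathrm{supp}\,\nu_\eta$, so the set $K$ could live in $\mathrm{supp}\,\nu_\eta\setminus\mathrm{supp}\,\nu_\zeta$ and need not be nowhere dense, in harmony with those measures possessing the additive property. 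The only remaining technical item is to confirm the inner regularity (Radonness) of $\hat{\nu_\eta}$, which I would justify from its construction as the unique extension of a Baire measure on the compact Hausdorff space $\beta\mathbb{N}$; granting that, the argument needs no combinatorial estimates whatsoever.
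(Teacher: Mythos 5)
Your proof is correct, and it takes a genuinely different deductive route from the paper's, even though the two share the same germ: both compare $\nu_\eta$ with a backward translate along the flow, both let the hypothesis $\eta \notin \mathcal{D}$ enter \emph{only} through the support equality of Theorem 4.1(2), and both ultimately combine the aperiodicity of the flow with the nowhere-dense criterion of Theorem 4.3. The paper argues by contradiction in the opposite direction: assuming the additive property, it picks $\eta' \in o_-(\eta)\setminus\{\eta\}$, transfers the additive property to $\nu_{\eta'}$ via Theorem 4.4, notes the common support, deduces that $\hat{\nu}_\eta$ and $\hat{\nu}_{\eta'}$ are mutually absolutely continuous, and then invokes the rigidity statement Theorem 3.5 (where aperiodicity is used) to force $\eta = \eta'$. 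You never transfer the additive property at all: you use aperiodicity directly to get $\eta \notin o_-(\zeta)$ for $\zeta = \phi^{-s}\eta$, hence $\nu_\eta \not\ll \nu_\zeta$ by Theorem 3.3, pass to the Borel extensions by Theorem 2.1, and extract by inner regularity a compact $K \subseteq S$ with $\hat{\nu}_\zeta(K)=0$ and $\hat{\nu}_\eta(K)>0$, which the support equality converts into a nowhere dense witness violating the only-if direction of Theorem 4.3. What your version buys: it dispenses with Theorems 4.4 and 3.5 entirely, and it makes explicit the regularity mechanism (a compact $\hat{\nu}_\zeta$-null set cannot meet the interior of any relatively open subset of $\mathrm{supp}\,\nu_\zeta$, hence is nowhere dense there) that the paper's one-line step ``it follows that $\hat{\nu}_\eta$ and $\hat{\nu}_{\eta'}$ are mutually absolutely continuous'' leaves implicit --- that step of the paper is essentially your compact-set argument run in reverse. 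What the paper's version buys is brevity given its toolkit and a clean illustration of the transfer principle of Theorem 4.4, which it wants on record anyway for Theorems 4.8--4.10. Your appeal to inner regularity is legitimate: the unique Borel extension of a Baire measure on the compact Hausdorff space $\beta\mathbb{N}$ is the regular one, consistent with the construction of $\hat{\mu}$ in Section 2; and your closing diagnosis of why the argument must break for $\eta \in \mathcal{D}$ (Theorem 4.1(1) makes the support inclusion strict, so $K$ may escape $\mathrm{supp}\,\nu_\zeta$) is exactly right.
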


\begin{proof}
 Assume that $\eta \notin \mathcal{D}$ and $\nu_{\eta}$ has the additive property. Then $\hat{\nu}_{\eta}$ vanishes on the nowhere dense Borel subsets of supp $\nu_{\eta}$. We take any $\eta^{\prime} \in o_{-}(\eta) \setminus \{\eta\}$. Since $\nu_{\eta^{\prime}} \ll \nu_{\eta}$ by Theorem 3.3, $\nu_{\eta^{\prime}}$ also has the additive property by Theorem 4.4 and supp $\nu_{\eta^{\prime}}$ = supp $\nu_{\eta}$ by Theorem 4.1(2). Hence $\hat{\nu}_{\eta^{\prime}}$ also vanishes on the nowhere dense Borel sets of supp  $\nu_{\eta^{\prime}}$, i.e., supp $\nu_{\eta}$. It follows that $\hat{\nu}_{\eta}$ and $\hat{\nu}_{\eta^{\prime}}$ are mutually absolutely continuous. Thus by Theorem 3.5 $\eta = \eta^{\prime}$, which contradicts the assumption.
\end{proof}

From Theorem 3.3 and Theorem 4.6 we get the following result.

\begin{thm}
Let $\eta \in \mathcal{D}$ and $\mu$ be a countably additive probability measure on the space $\mathbb{R}_+ = [0, \infty)$ of nonnegative real numbers. Then the density measure $\nu$ in $\mathcal{C}_0$ defined by
\[\nu(A) = \int_0^{\infty} \nu_{\phi^{-s}\eta}(A) d\mu(s), \quad A \in \mathcal{P}(\mathbb{N}) \]
has the additive property.
\end{thm}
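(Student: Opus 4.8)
The plan is to reduce everything to the single absolute-continuity relation $\nu \ll \nu_{\eta}$. Indeed, since $\eta \in \mathcal{D}$, Theorem 4.6 tells us that $\nu_{\eta}$ itself has the additive property, and Theorem 4.4 guarantees that the additive property is inherited by any measure that is absolutely continuous with respect to $\nu_{\eta}$. So once I establish $\nu \ll \nu_{\eta}$ --- that the whole mixture is dominated by the single measure sitting at the top of the backward orbit --- the conclusion is immediate. The entire content of the proof therefore lies in verifying this one absolute continuity.

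To obtain it I would extract a quantitative form of the sufficiency argument of Theorem 3.3. For every $s \ge 0$ the point $\phi^{-s}\eta$ lies on $o_-(\eta)$, and the scale-comparison computation in that proof in fact yields a pointwise multiplicative bound rather than a mere $\varepsilon$-$\delta$ statement: in the model case $\eta = (\omega,0)$ with integer $s = m$, where $\phi^{-m}\eta$ corresponds to $\tau^{-m}\omega$, passing from the scale $2^n$ down to $2^{n-m}$ inflates the local density $|A \cap 2^n|/2^n$ by a factor of at most $2^m$, so that
\[
\nu_{\phi^{-s}\eta}(A) \le 2^{s+1}\,\nu_{\eta}(A) \qquad \text{for every } A \in \mathcal{P}(\mathbb{N}),
\]
and the general case $\eta = (\omega,t)$ follows by the same argument up to a bounded factor, exactly as Section 3 modifies its proofs from $t=0$ to arbitrary $t$. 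Of course one also has the trivial bound $\nu_{\phi^{-s}\eta}(A) \le 1$.

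Combining the two bounds gives $\nu(A) \le \int_0^\infty \min\!\bigl(1,\ 2^{s+1}\nu_{\eta}(A)\bigr)\,d\mu(s)$, an increasing function of $\nu_{\eta}(A)$. As $\nu_{\eta}(A) \to 0$ the integrand tends to $0$ pointwise in $s$ while remaining bounded by $1$, which is $\mu$-integrable precisely because $\mu$ is a probability measure on $\mathbb{R}_+$; dominated convergence then forces the right-hand side to $0$. Hence for every $\varepsilon > 0$ there is $\delta > 0$ with $\nu_{\eta}(A) < \delta \Rightarrow \nu(A) < \varepsilon$, i.e. $\nu \ll \nu_{\eta}$, and the result follows from Theorems 4.6 and 4.4 as described.

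I expect the one genuine obstacle to be that the multiplicative constant $2^{s+1}$ blows up as $s \to \infty$, so there is no uniform-in-$s$ absolute continuity that could simply be integrated against $\mu$. The device that resolves this is to abandon the useless large-$s$ estimate in favour of the trivial bound $\le 1$, letting the finiteness of the total mass $\mu(\mathbb{R}_+) = 1$ together with dominated convergence absorb the divergence; this is exactly where the hypotheses that $\mu$ is countably additive and a probability measure are used. The only remaining routine points are the measurability of $s \mapsto \nu_{\phi^{-s}\eta}(A)$, already implicit in the well-definedness of the defining integral, and the passage from $t = 0$ to general $t$.
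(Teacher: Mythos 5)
Your proof is correct, but it takes a genuinely different route from the paper's. The paper never establishes the global domination $\nu \ll \nu_{\eta}$; instead it verifies the definition of the additive property directly: given an increasing sequence $\{A_i\}$, it takes the witness set $B$ furnished by the additive property of $\nu_{\eta}$ (Theorem 4.6), uses only the \emph{qualitative} fact $\nu_{\eta^{\prime}} \ll \nu_{\eta}$ for each $\eta^{\prime} \in o_-(\eta)$ (Theorem 3.3) to transfer the conditions $\hat{\nu}_{\eta}(\overline{B} \setminus \cup_{i=1}^{\infty}\overline{A}_i) = 0$ and $\nu_{\eta}(A_i \setminus B) = 0$ to every $\nu_{\phi^{-s}\eta}$, and then integrates in $s$, interchanging $\lim_i$ with $\int d\mu(s)$ by dominated convergence, to conclude that the \emph{same} $B$ witnesses the additive property for $\nu$. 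You instead prove the stronger statement $\nu \ll \nu_{\eta}$ by making the sufficiency computation of Theorem 3.3 quantitative --- and your bound is sound: from $|A \cap 2^{n-m}|/2^{n-m} \le 2^m\,|A \cap 2^n|/2^n$, taking limits along $\omega$ gives $\nu_{\tau^{-m}\omega}(A) \le 2^m \nu_{\omega}(A)$, with a bounded extra factor absorbing noninteger $s$ and general $t$ --- and then handling the divergence of the constant $2^{s+1}$ by switching to the trivial bound $1$ on a tail $(L,\infty)$ of small $\mu$-mass, after which Theorems 4.6 and 4.4 finish in one line. Each approach buys something: yours isolates a reusable intermediate fact of independent interest (the mixture is dominated by the single measure $\nu_{\eta}$ at the top of the backward orbit) and makes completely explicit where the hypothesis that $\mu$ is a countably additive probability measure enters (tightness of the tail); the paper's avoids any quantitative estimate, needing only the $\varepsilon$-$\delta$ relation already proved pointwise in $s$, and yields the extra structural information that the additive-property witness for $\nu_{\eta}$ serves verbatim as the witness for $\nu$. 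Both proofs invoke the same dominated-convergence mechanism, yours to kill the tail in $s$, the paper's to swap $\lim_i$ with the integral.
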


\begin{proof}
Let $\{A_i\}_{i = 1}^{\infty}$ be an increasing sequence of $\mathcal{P}(\mathbb{N})$.
By Theorem 4.6 $\nu_{\eta}$ has the additive property. Then there is a set $B \in \mathcal{P}(\mathbb{N})$ such that $\lim_i \nu_{\eta}(A_i) = \nu_{\eta}(B)$ and $\nu_{\eta}(A_i \setminus B) = 0$ for every $i \ge 1$. Notice that the latter condition  yields that $\overline{A_i} \cap$ supp~$\nu_{\eta}$ $\subseteq$ $\overline{B} \cap$ supp~$\nu_{\eta}$ for each $i \ge 1$, then we have that 
\begin{align}
\lim_i \nu_{\eta}(A_i) = \nu_{\eta}(B) &\Longleftrightarrow \hat{\nu}_{\eta}(\cup_{i=1}^{\infty} \overline{A}_i) = \hat{\nu}_{\eta}(\overline{B}) \notag \\
&\Longleftrightarrow \hat{\nu}_{\eta}(\overline{B} \setminus (\cup_{i=1}^{\infty} \overline{A}_i)) = 0. \notag
\end{align}
For any $\eta^{\prime} \in o_-(\eta)$ since $\nu_{\eta^{\prime}} \ll \nu_{\eta}$, it follows that
\[\hat{\nu}_{\eta^{\prime}}(\overline{B} \setminus (\cup_{i=1}^{\infty} \overline{A}_i)) = 0 
\Longleftrightarrow \lim_i \nu_{\eta^{\prime}}(A_i) = \nu_{\eta^{\prime}}(B). \]
Also $\nu_{\eta^{\prime}}(A_i \setminus B) = 0$ holds for each $i \ge 1$. Then it follows that
\[\nu(A_i \setminus B) = \int_0^{\infty} \nu_{\phi^{-s}\eta}(A_i \setminus B) d\mu(s) = 0 \]
for each $i \ge 1$. Also we have that
\begin{align}
\nu(B) &= \int_0^{\infty} \nu_{\phi^{-s}\eta}(B) d\mu(s) \notag \\
&= \int_0^{\infty} \lim_i \nu_{\phi^{-s}\eta}(A_i) d\mu(s) \notag \\
&=\lim_i \int_0^{\infty} \nu_{\phi^{-s}\eta}(A_i) d\mu(s) = \lim_i \nu(A_i). \notag 
\end{align}
Hence $\nu$ has the additive property.
\end{proof}

Next from Theorem 3.9, Theorem 4.5 and Theorem 4.6 we get the following result.

\begin{thm}
For any mutually singular finite number of density measures $\nu_{\eta_1}, \nu_{\eta_2}, \cdots, \nu_{\eta_m}$, their finite convex combination
\[\nu = \sum_{i=1}^m c_i \nu_{\eta_i} \]
has the additive property if and only if $\eta_i \in \mathcal{D}, i=1,2, \cdots, m$ and orbit closures $\overline{o}_-(\eta_i), i=1,2, \cdots, m$, are pairwise disjoint.
\end{thm}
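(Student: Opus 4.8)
The plan is to prove both directions by induction on $m$, using Theorem 4.5 as the inductive engine and Theorem 4.6 for the base case. First I would record two preliminary reductions that make Theorem 4.5 applicable. Since the additive property, singularity and strong singularity are all invariant under multiplying a measure by a positive constant, and since for $c_i > 0$ the measure $\hat{\nu} = \sum_i c_i \hat{\nu}_{\eta_i}$ has exactly the same null Borel sets, hence the same support, as $\sum_i \hat{\nu}_{\eta_i}$, the characterization in Theorem 4.3 shows that $\nu = \sum_i c_i \nu_{\eta_i}$ has the additive property if and only if $\sum_i \nu_{\eta_i}$ does. Thus I may ignore the coefficients and treat $\nu$ as a plain sum. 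Second, I would establish the support identity $\text{supp}\,(\sum_{i=1}^k \nu_{\eta_i}) = \bigcup_{i=1}^k \text{supp}\,\nu_{\eta_i}$: a point $x$ lies in the support of the sum iff every neighbourhood has positive mass for some summand, and since there are only finitely many summands, taking a finite intersection of neighbourhoods forces $x \in \text{supp}\,\nu_{\eta_i}$ for some $i$; the union is closed, being finite. This identity, combined with Theorems 2.4 and 3.9, translates strong singularity of a partial sum against a single remaining term into pairwise disjointness of the corresponding backward orbit closures.

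For sufficiency, suppose each $\eta_i \in \mathcal{D}$ and the $\overline{o}_-(\eta_i)$ are pairwise disjoint. I induct on $m$; the base case $m = 1$ is exactly Theorem 4.6. For the inductive step, set $\mu = \sum_{i=1}^k \nu_{\eta_i}$ and split off $\nu_{\eta_{k+1}}$. By the induction hypothesis $\mu$ has the additive property, and $\nu_{\eta_{k+1}}$ has it by Theorem 4.6. By the support identity $\text{supp}\,\mu = \bigcup_{i \le k}\text{supp}\,\nu_{\eta_i}$, and since $\overline{o}_-(\eta_i) \cap \overline{o}_-(\eta_{k+1}) = \emptyset$ gives $\text{supp}\,\nu_{\eta_i} \cap \text{supp}\,\nu_{\eta_{k+1}} = \emptyset$ for each $i \le k$ (Theorems 3.9 and 2.4), we get $\text{supp}\,\mu \cap \text{supp}\,\nu_{\eta_{k+1}} = \emptyset$, so $\mu$ and $\nu_{\eta_{k+1}}$ are strongly singular, in particular singular. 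Theorem 4.5 then yields that $\mu + \nu_{\eta_{k+1}} = \sum_{i=1}^{k+1}\nu_{\eta_i}$ has the additive property.

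For necessity, assume $\nu$, equivalently $\sum_{i=1}^m \nu_{\eta_i}$, has the additive property, with the $\nu_{\eta_i}$ pairwise singular. Again I induct on $m$, the base case being Theorem 4.6. Writing $\mu = \sum_{i=1}^{m-1}\nu_{\eta_i}$, a short argument at the level of the countably additive measures $\hat{\nu}_{\eta_i}$, choosing for each $i < m$ a Borel set $S_i$ witnessing $\hat{\nu}_{\eta_i} \perp \hat{\nu}_{\eta_m}$ and taking $S = \bigcup_{i<m} S_i$, shows that $\mu$ and $\nu_{\eta_m}$ are singular; here I use Theorem 2.3. Since $\mu + \nu_{\eta_m}$ has the additive property, Theorem 4.5 forces both $\mu$ and $\nu_{\eta_m}$ to have it and to be strongly singular. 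From $\nu_{\eta_m}$ having the additive property and Theorem 4.6, $\eta_m \in \mathcal{D}$; from strong singularity and the support identity together with Theorems 2.4 and 3.9, $\overline{o}_-(\eta_i) \cap \overline{o}_-(\eta_m) = \emptyset$ for all $i < m$; and applying the induction hypothesis to $\mu$ gives $\eta_i \in \mathcal{D}$ for $i < m$ together with pairwise disjointness among the first $m-1$ orbit closures. Combining these completes the step.

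The only genuinely delicate points are the two reductions in the first paragraph, namely the scaling and null-set reduction to plain sums and the finite support identity, since everything afterward is a bookkeeping induction over the already-established Theorems 4.5, 4.6, 3.9 and 2.4. I expect the main obstacle to be handling the support of a finite sum correctly: one must use finiteness, a finite intersection of neighbourhoods and a finite union of witnessing Borel sets, to pass between ``positive mass for some summand'' and ``membership in some individual support,'' which is exactly what lets a partial sum inherit strong singularity against the remaining term.
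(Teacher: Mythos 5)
Your proof is correct and follows exactly the route the paper intends: the paper states this theorem without a written proof, as a direct consequence of Theorems 3.9, 4.5 and 4.6, and your induction on $m$ — together with the scaling/null-set reduction via Theorem 4.3 and the finite support identity $\mathrm{supp}\,\bigl(\sum_i \nu_{\eta_i}\bigr) = \bigcup_i \mathrm{supp}\,\nu_{\eta_i}$ — is precisely the bookkeeping needed to make that derivation explicit. The only caveat is that you tacitly (and reasonably) assume all $c_i > 0$, which the paper's statement also implicitly requires, since necessity fails for a vanishing coefficient.
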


Combining Theorem 4.8 and Theorem 4.9, we have that

\begin{thm}
Take a finite number of points $\eta_1, \eta_2, \cdots, \eta_m$ in $\mathcal{D}$ such that orbit closures $\overline{o}_-(\eta_1), \overline{o}_-(\eta_2), \cdots, \overline{o}_-(\eta_m)$ are pairwise disjoint, and let $\mu_i, i=1,2, \cdots, m$, be countably additive probability measures on $\mathbb{R}_+ = [0, \infty)$. Then the density measure $\nu$ in $\mathcal{C}_0$ defined by
\[\nu = \sum_{i=1}^m c_i \int_0^{\infty}  \nu_{\varphi^{-s}\eta_i} d\mu_i(s) \]
has the additive property, where $0 \le c_i \le 1, i=1,2, \cdots, m$ with $\sum_{i=1}^m c_i =1$.
\end{thm}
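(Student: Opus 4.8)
The plan is to combine Theorem 4.8, which already delivers the additive property for each individual mixture, with the splitting criterion of Theorem 4.5, which governs when a sum of singular measures inherits the additive property; this realizes concretely the strong-singularity mechanism underlying Theorem 4.9. Write $\lambda_i = \int_0^\infty \nu_{\phi^{-s}\eta_i}\, d\mu_i(s)$, so that $\nu = \sum_{i=1}^m c_i \lambda_i$. Since each $\eta_i \in \mathcal{D}$ and each $\mu_i$ is a countably additive probability measure on $\mathbb{R}_+$, Theorem 4.8 gives at once that every $\lambda_i$ has the additive property. This is the easy input.

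The substantive step is to show that the $\lambda_i$ are pairwise strongly singular. Fix $i \ne j$. Because $\overline{o}_-(\eta_i) \cap \overline{o}_-(\eta_j) = \emptyset$, Theorem 3.9 yields that $\nu_{\eta_i}$ and $\nu_{\eta_j}$ are strongly singular, so there is a set $D = D_{ij} \in \mathcal{P}(\mathbb{N})$ with $\nu_{\eta_i}(D) = 0$ and $\nu_{\eta_j}(D^c) = 0$. Now for every $s \ge 0$ we have $\phi^{-s}\eta_i \in o_-(\eta_i) \subseteq \overline{o}_-(\eta_i)$, so $\nu_{\phi^{-s}\eta_i} \prec \nu_{\eta_i}$ by Theorem 3.7; hence $\nu_{\eta_i}(D) = 0$ forces $\nu_{\phi^{-s}\eta_i}(D) = 0$ for all $s$, and integrating against $\mu_i$ gives $\lambda_i(D) = 0$. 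Symmetrically $\lambda_j(D^c) = 0$, so $\lambda_i$ and $\lambda_j$ are strongly singular with the common witness $D_{ij}$. The point of this route is that it sidesteps any computation of the support of the mixture $\lambda_i$; it only uses that weak absolute continuity propagates null sets from $\nu_{\eta_i}$ down its backward orbit.

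With these two facts in hand I would finish by induction on $m$ using Theorem 4.5, regarding the $c_i\lambda_i$ as finite measures. The base case $m=1$ is Theorem 4.8. For the inductive step, the partial sum $\sigma = \sum_{i=1}^{m-1} c_i \lambda_i$ has the additive property by the induction hypothesis, $c_m\lambda_m$ has it by the first step, and $\sigma$ is strongly singular from $c_m\lambda_m$: setting $D = \bigcap_{i=1}^{m-1} D_{im}$ one checks, by monotonicity and finite subadditivity, that $\sigma(D) = 0$ and $\lambda_m(D^c) = 0$ since $D^c = \bigcup_{i=1}^{m-1} D_{im}^c$. As strong singularity implies singularity, Theorem 4.5 applies and shows that $\sigma + c_m\lambda_m = \sum_{i=1}^m c_i\lambda_i$ has the additive property.

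The main obstacle I anticipate is bookkeeping rather than conceptual: Theorem 4.5 and the additive property are stated for probability measures, whereas the intermediate sums $\sigma$ and the scaled pieces $c_i\lambda_i$ are finite measures of total mass less than one. I would resolve this by phrasing everything through the nowhere-dense characterization of Theorem 4.3, which is manifestly invariant under multiplication by a positive scalar and stable under the finite sums in play, so that the inductive application of Theorem 4.5 is legitimate. I would also record the trivial but needed remark that a vanishing integrand yields a vanishing integral, which is exactly what turns $\nu_{\phi^{-s}\eta_i}(D) = 0$ into $\lambda_i(D) = 0$.
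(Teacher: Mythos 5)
Your proof is correct, and it is in substance the argument the paper intends but never writes out: Theorem 4.10 is justified there only by the phrase ``Combining Theorem 4.8 and Theorem 4.9,'' with no proof given. Your route is in fact slightly different from, and more careful than, a literal combination: Theorem 4.9 as stated applies to convex combinations of the point measures $\nu_{\eta_i}$ themselves, not to the mixtures $\lambda_i = \int_0^{\infty} \nu_{\phi^{-s}\eta_i}\, d\mu_i(s)$, so it cannot be invoked directly. The missing bridge is exactly your substantive step: the strong-singularity witness $D_{ij}$ for $\nu_{\eta_i}$ and $\nu_{\eta_j}$ (Theorem 3.9, from disjointness of the backward orbit closures) remains null for every $\nu_{\phi^{-s}\eta_i}$ because $\phi^{-s}\eta_i \in o_-(\eta_i)$ gives $\nu_{\phi^{-s}\eta_i} \ll \nu_{\eta_i}$ by Theorem 3.3 (weak absolute continuity via Theorem 3.7 already suffices), and integrating the identically vanishing integrand yields $\lambda_i(D_{ij}) = 0$, symmetrically $\lambda_j(D_{ij}^c) = 0$. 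Your induction is then sound: $D = \bigcap_{i=1}^{m-1} D_{im}$ witnesses strong singularity of the partial sum against $c_m\lambda_m$ by monotonicity and finite subadditivity, strong singularity supplies the singularity hypothesis of Theorem 4.5, and Theorem 4.8 gives the additive property of each $\lambda_i$. Your normalization remark is also the right bookkeeping --- the additive property, whether read off the definition or the nowhere-dense criterion of Theorem 4.3, is invariant under multiplication by a positive scalar, and terms with $c_i = 0$ can simply be discarded. In effect you have re-proved the analogue of Theorem 4.9 at the level of the mixtures $\lambda_i$, using the same machinery (Theorems 3.9 and 4.5) from which 4.9 itself is derived; this fills a genuine gap that the paper's one-line justification glosses over.
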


\bigskip
\bigskip

\end{document}